\theoremstyle{plain}
\newtheorem{theorem}{Theorem}[section]
\newtheorem{corollary}[theorem]{Corollary}
\newtheorem{proposition}[theorem]{Proposition}
\newtheorem{lemma}[theorem]{Lemma}
\newtheorem{remark}[theorem]{Remark}
\numberwithin{equation}{section}
\newcommand*{\dif}{\mathop{}\!\mathrm{d}}
\renewcommand{\Re}{\operatorname{Re}}
\renewcommand{\Im}{\operatorname{Im}}
\title{Averaging lemmas and hypoellipticity}
\author{Yuzhe Zhu} 
\date{February 24, 2025} 
\address{Department of Mathematics, University of Chicago, Chicago, Illinois 60637, USA}
\email{yuzhezhu@uchicago.edu}
\begin{document}
\begin{abstract}
We use the methods of commutator and fundamental solutions to establish averaging lemmas and hypoelliptic estimates for purely kinetic transport equations. Assuming certain amount of velocity regularity for solutions, we extend our analysis using the commutator method to derive the averaging and hypoelliptic regularity properties for kinetic equations in the presence of general inhomogeneous fluxes. These results find applications in the study of hypoelliptic advection-diffusion equations and kinetic formulations of hyperbolic conservation laws including Burgers' equation with transport and isentropic gas dynamics. 
\end{abstract}

\maketitle
\hypersetup{bookmarksdepth=2}
\setcounter{tocdepth}{1}
\tableofcontents

\section{Introduction}
Let the operator $D_{z}^m:=(-\Delta_{z})^{m/2}$ be the derivative with respect to the variable $z$ of order $m\in\mathbb{R}$. We study the regularity properties for the solution $f=f(t,x,v)$, with $(t,x,v)\in\mathbb{R}\times\mathbb{R}^d\times\mathbb{R}^d$ and $d\in\mathbb{N}_+$, to the kinetic transport equation 
\begin{align}\label{KE}
\partial_tf+\nabla_x\cdot\left(vf+uf\right)=\left(\lambda D_t^\alpha+D_x^\alpha\right) D_v^\beta g, 
\end{align}
where the vector field $u=u(t,x):\mathbb{R}\times\mathbb{R}^d\rightarrow\mathbb{R}^d$, the scalar function $g=g(t,x,v)$, and the constants $\lambda,\alpha\in[0,1]$, $\beta\ge0$. 

The equation usually models the behaviour of a system of particles or emerges from various kinetic formulations; see Section~\ref{apps} below. The solution $f(t,x,v)$ can be interpreted as a distribution of particles at time $t$ that occupy the position $x$ and have a state described by the velocity variable $v$. The averaging lemmas for purely kinetic equations of the type \eqref{KE} without the macroscopic velocity field $u$, established in early works such as \cite{Ago,GPS,GLPS,Gerard,DPLM}, seek to understand the regularizing effect on the velocity average (or moment) of the solution $f$, that is, 
\begin{align}\label{af}
\rho_\phi[f](t,x) := \int_{\mathbb{R}^d}f(t,x,v)\phi(v)\dif v,\quad \phi\in C_c^\infty(\mathbb{R}^d). 
\end{align}
Combining this with additional regularity of solutions with respect to the velocity variable, hypoelliptic regularization for the solution itself was further explored in \cite{GG,Bouchut}. 

\subsection{Main results}\label{main-thm}
The objective of this work is to use the hypoelliptic methods, including the commutator and energy methods developed in \cite{Bouchut,JLT,JLTs,AL} and the method of fundamental solutions applied in \cite{PP,IS,GM}, to quantify the  regularity properties for solutions to \eqref{KE}. 

We first give the regularity results for solutions to \eqref{KE} in the purely kinetic case. Let us recall that $B_R$ denotes the Euclidean ball in $\mathbb{R}^d$ centered at the origin with radius $R$, and the constants $p_\star$ and $q_\star$ are the conjugate exponents to $p$ and $q$, respectively, that is, 
\begin{align*}
1/p+1/{p_\star}=1/q+1/{q_\star}=1. 
\end{align*}

\begin{theorem}\label{reg-0} 
Let the constants $\lambda,\alpha\in[0,1]$, $\beta\ge0$, $p,q\in(1,\infty)$, $R\ge1$. Let $f$ be a solution to \eqref{KE} with $u=0$, $g\in L^{p}_{t,x}L_v^{q}(\mathbb{R}^{1+2d})$, and its support ${\rm supp\;\!} f:=\{(t,x,v):f(t,x,v)\neq0\}\subset\mathbb{R}\times\mathbb{R}^d\times B_R$. 
\begin{enumerate}[label=(\roman*),leftmargin=0.675cm]
\item\label{reg-1}
If $f\in L_{t,x}^2H_v^k(\mathbb{R}^{1+2d})$ satisfies $D_v^lf\in L_{t,x}^{p_\star}L_v^{q_\star}(\mathbb{R}^{1+2d})$ for some $k\ge0$ and $0\le l\le\beta$, then for 
\begin{align*}
s:=\frac{(1-\alpha)(1+2k)}{2(1+\beta+2k-l)}, 
\end{align*}
we have $f\in H_{t,x}^sH_v^{_{-3/2}}(\mathbb{R}^{1+2d})$ with  
\begin{align*}
\|D_x^sf\|_{L_{t,x}^2H_v^{{-3/2}}}^2 + R^{-2s}\|D_t^sf\|_{L_{t,x}^2H_v^{{-3/2}}}^2\\
\le C\left(1+\lambda R^\alpha\right) \|D_v^lf\|_{L_{t,x}^{p_\star}L_v^{q_\star}} \|g\|_{L_{t,x}^{p}L_v^{q}} + C\|f\|_{L_{t,x}^2H_v^k}^2&. 
\end{align*} 
\item\label{reg-2}
If $f\in L_{t,x}^2H_v^k(\mathbb{R}^{1+2d})$ satisfies $D_v^lf\in L_{t,x}^{p_\star}L_v^{q_\star}(\mathbb{R}^{1+2d})$ for some $k\ge0$ and $0\le l\le1+\beta$, then for 
\begin{align*}
s:=\frac{(1-\alpha)k}{1+\beta+2k-l},
\end{align*}
we have $f\in L_v^2H_{t,x}^s(\mathbb{R}^{1+2d})$ with  
\begin{align*}	
\|D_x^sf\|_{L_{t,x,v}^2}^2 + R^{-2s}\|D_t^sf\|_{L_{t,x,v}^2}^2\\
\le C\left(1+\lambda R^\alpha\right) \|D_v^lf\|_{L_{t,x}^{p_\star}L_v^{q_\star}} \|g\|_{L_{t,x}^{p}L_v^{q}} + C\|f\|_{L_{t,x}^2H_v^k}^2&. 
\end{align*}
\end{enumerate}
Here $C>0$ is a constant depending only on $d,\beta,k,p,q$. 
\end{theorem}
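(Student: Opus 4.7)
The plan is to argue on the Fourier side in the $(t,x)$-variables. Applying the Fourier transform in $(t,x)$ to \eqref{KE} with $u=0$ reduces the equation to the pointwise algebraic relation
\[
i(\tau+v\cdot\xi)\,\hat f(\tau,\xi,v)=\bigl(\lambda|\tau|^{\alpha}+|\xi|^{\alpha}\bigr)\,D_v^{\beta}\hat g(\tau,\xi,v).
\]
Because $\supp f\subset \R\times\R^d\times B_R$, one has $|\tau+v\cdot\xi|\lesssim |\xi|(R+|\tau|/|\xi|)$, so the weighted multiplier $|\xi|^{s}+R^{-s}|\tau|^{s}$ appearing in the conclusion is exactly what can be recovered by inverting the transport symbol away from the resonance set; it therefore suffices to estimate $\||\xi|^{s}\hat f\|$ in the appropriate mixed norm, since the $|\tau|^{s}$-piece is handled identically with $\xi$ replaced by $R^{-1}\tau$. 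For each frozen $(\tau,\xi)$, introduce a smooth velocity cutoff $\chi_{\delta}(v;\tau,\xi)$ adapted to the resonance slab $\{v:|\tau+v\cdot\xi|\le \delta\}$, with $\delta=\delta(\tau,\xi)$ to be tuned at the end.

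Writing $\hat f=\chi_{\delta}\hat f+(1-\chi_{\delta})\hat f$, I treat the non-resonant piece by inverting the transport symbol and integrating by parts in $v$, which moves $\beta$ derivatives off $\hat g$ and redistributes them between the kernel $(1-\chi_{\delta})(\tau+v\cdot\xi)^{-1}$ and the factor $\hat f$; the derivative budget accounts for the constraints $l\le \beta$ in part (i) and $l\le 1+\beta$ in part (ii). Each $v$-derivative landing on the singular kernel contributes a factor of order $|\xi|/\delta$, and H\"older duality in the mixed norms $L_{t,x}^{p}L_{v}^{q}$ and $L_{t,x}^{\px}L_{v}^{\qx}$ then gives a non-resonant bound roughly of the form $\delta^{-(1+\beta-l)}|\xi|^{s+\alpha-1-\beta+l}(1+\lambda R^{\alpha})\|D_v^l f\|_{L_{t,x}^{\px}L_{v}^{\qx}}\|g\|_{L_{t,x}^{p}L_{v}^{q}}$. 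On the resonant piece, $\chi_{\delta}\hat f$ is supported on a $v$-slab of thickness $\sim \delta/|\xi|$, and interpolation between $\|\hat f\|_{L^{2}_{v}}$ and $\|\hat f\|_{H^{k}_{v}}$ converts the slab thickness into a power of the hypothesized norm $\|f\|_{L_{t,x}^{2}H_{v}^{k}}$.

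For part (ii) one assembles the two pieces in $L^{2}_{\tau,\xi,v}$ via Plancherel: the resonant contribution scales like $(\delta/|\xi|)^{k}|\xi|^{2s}\|f\|_{L_{t,x}^{2}H_{v}^{k}}^{2}$, and minimizing in $\delta$ yields the exponent $s=(1-\alpha)k/(1+\beta+2k-l)$. For part (i) the $H^{-3/2}_v$ output norm is dualized against test functions in $H^{3/2}_v$; since $3/2$ is above the one-dimensional Sobolev threshold $1/2$, such a test function is pointwise controlled along the direction $\xi/|\xi|$ normal to the resonance plane, which saves one additional power of the slab thickness and effectively upgrades the exponent $k$ to $(1+2k)/2$ in the resonant estimate. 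Combining with the unchanged non-resonant bound and balancing $\delta$ yields $s=(1-\alpha)(1+2k)/[2(1+\beta+2k-l)]$.

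The main technical obstacle I foresee is the careful bookkeeping of powers of $\delta$, $|\xi|$, and $R$ as $v$-derivatives redistribute among the cutoff, the singular kernel, the source $\hat g$, and the factor $\hat f$ under the mixed $L_{t,x}^{p}L_{v}^{q}$ duality, and in particular verifying that $3/2$ is exactly the one-dimensional Sobolev exponent that produces the claimed improvement in part (i). A secondary concern is ensuring that the derivatives of $\chi_{\delta}$, supported in a thin transverse annulus, contribute no worse than the resonance slab itself.
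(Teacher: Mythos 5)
Your proposal follows the classical velocity--averaging route (Fourier transform in $(t,x)$ only, resonant/non-resonant splitting according to the size of $|\tau+v\cdot\xi|$), whereas the paper never localizes near the characteristic set in physical $v$: it takes the Fourier transform in $v$ as well, runs the commutator identity $[\la\eta\ra^{1-r},X]=(1-r)\,\xi\cdot\eta\,\la\eta\ra^{-1-r}$ (Lemma~\ref{reg-lemma}), and cuts off in the $\eta$-frequency region $\{|\eta|\le|(c_1\tau,\xi)|^{\delta}\}$ (Proposition~\ref{reg-est}). This difference is not cosmetic, and your sketch has two genuine gaps. First, the resonant estimate is quantitatively wrong and, even corrected, cannot reach the claimed exponent for $k>1/2$. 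The sharp bound for the restriction of $h\in H^k_v$ to a slab of thickness $\epsilon$ transverse to $\xi/|\xi|$ is $\int_{\mathrm{slab}}|h|^2\lesssim \epsilon^{\min\{2k,1\}}\|h\|_{H^k_v}^2$ (Sobolev embedding plus H\"older for $k\le 1/2$; saturation at $\epsilon^{1}$ for $k>1/2$, as a fixed low-frequency bump shows). With your stated power $(\delta/|\xi|)^{k}$ the optimization in $\delta$ against the non-resonant bound yields $s=\frac{(1-\alpha)k}{2k+2(1+\beta-l)}$, not the claimed $\frac{(1-\alpha)k}{1+\beta+2k-l}$; one needs $(\delta/|\xi|)^{2k}$, and that is only available for $k\le 1/2$. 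For $k>1/2$ the slab mechanism caps the gain and part~\ref{reg-2} (and likewise part~\ref{reg-1}) is out of reach without an additional idea such as a Littlewood--Paley decomposition in $v$ --- which is essentially what the paper's $\eta$-side cutoff accomplishes. Relatedly, your explanation of the exponent $-3/2$ in part~\ref{reg-1} (``above the one-dimensional Sobolev threshold'') would equally apply to any $H_v^{-r'}$ with $r'>1/2$ and so does not single out $3/2$; in the paper it arises because the quadratic form $|\xi|^2\la\eta\ra^2-2r|\xi\cdot\eta|^2$ loses its $|\eta|^2$-coercivity exactly at $r=1/2$, forcing the weight $\la\eta\ra^{-1-r}=\la\eta\ra^{-3/2}$.

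Second, for $p,q\neq 2$ the right-hand side $\|D_v^lf\|_{L_{t,x}^{\px}L_v^{\qx}}\|g\|_{L_{t,x}^{p}L_v^{q}}$ cannot be extracted from pointwise-in-$(\tau,\xi)$ Fourier-side bounds; one must keep the bilinear pairing and show that the resulting $(\tau,\xi,\eta)$-symbol is a bounded Fourier multiplier on the mixed space $L_{t,x}^{p}L_v^{q}$ (the paper invokes a Mikhlin-type theorem, with the hypothesis \eqref{Mi-cond} built into its cutoffs). Your kernel $(1-\chi_{\delta})(\tau+v\cdot\xi)^{-1}$ with a resonance-adapted, $(\tau,\xi)$-dependent threshold $\delta(\tau,\xi)$ has $(\tau,\xi)$-derivatives that blow up near the characteristic variety, so the Mikhlin condition is far from automatic; this is precisely the bookkeeping you defer, and it is where the argument would need real work. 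For $p=q=2$ and $k\le 1/2$ your outline does reduce to the classical interpolation proof and can be made rigorous, but as written it does not establish the theorem in the stated generality.
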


\begin{theorem}\label{reg-p} 
Let the constants $\lambda=0$, $\alpha\in[0,1]$, $\beta\ge0$, $p,q\in[1,\infty]$, $T>0$. Let $f,g\in L_{t,x}^{p}L_v^{q}(\mathbb{R}^{1+2d})$ satisfy \eqref{KE} with $u=0$, and ${\rm supp\;\!} f,\,{\rm supp\;\!} g\subset[-T,T]\times\mathbb{R}^d\times\mathbb{R}^d$. If $D_v^kf\in L_{t,x}^{p}L_v^{q}(\mathbb{R}^{1+2d})$ for some $k>0$, then for any $p'\ge p$, $q'\ge q$ and $s\ge0$ such that 
\begin{align*}
0\le s<\frac{\left(1-\alpha\right)k}{1+\beta+k} + \left(\frac{1}{p'}-\frac{1}{p}\right)\left(d+\frac{\alpha+\beta+k}{1+\beta+k}\right) + \left(\frac{1}{q'}-\frac{1}{q}\right)\frac{d\left(1-\alpha\right)}{1+\beta+k}, 
\end{align*}
we have $D_x^sf\in L_{t,x}^{p'}L_v^{q'}(\mathbb{R}^{1+2d})$ with  
\begin{align*}
\|D_x^sf\|_{L_{t,x}^{p'}L_v^{q'}}
\le C_T\|D_v^kf\|_{L_{t,x}^{p}L_v^{q}} + C_T\|g\|_{L_{t,x}^{p}L_v^{q}}.  
\end{align*} 
Here $C_T>0$ is a constant depending only on $d,\alpha,\beta,k,p,q,p',q',T$. 
\end{theorem}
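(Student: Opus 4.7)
The plan is to combine a Duhamel representation with a resonance-based Littlewood-Paley decomposition (in the spirit of DiPerna-Lions-Meyer and Bouchut), supplemented by a Bernstein/HLS-type estimate to pick up the $(p,q)\to(p',q')$ integrability gain. The whole argument is carried out at the level of $f$ itself rather than its velocity averages.

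\emph{Step 1 (Duhamel).} Since $\lambda = u = 0$ and $f\equiv 0$ for $t<-T$, integrating \eqref{KE} along the characteristics of $\partial_t + v\cdot\nabla_x$ gives
\begin{align*}
f(t,x,v) = \int_{-T}^{t} \bigl(D_x^\alpha D_v^\beta g\bigr)\bigl(s,\,x-(t-s)v,\,v\bigr)\,ds.
\end{align*}
Project dyadically onto spatial frequency $|\xi|\sim N$, and for a threshold $\delta=\delta(N)$ split $f_N = f_N^{\mathrm{res}} + f_N^{\mathrm{nres}}$ using a smooth cutoff in the resonance variable $\tau+v\cdot\xi$ of scale $\delta$. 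On the non-resonant piece the multiplier $(\tau+v\cdot\xi)^{-1}$ is bounded by $\delta^{-1}$, so the equation gives control by $\delta^{-1}N^\alpha\|g\|_{L^p L^q}$ (the $D_v^\beta$ is absorbed by the smoothness of the cutoff). On the resonant piece, $v$ lies in a slab of width $\delta/N$ in the direction $\xi/|\xi|$, and a Bernstein/Nikolskii bound along that slab converts $\|D_v^k f\|_{L^p L^q}$ into a gain of $(\delta/N)^k$. Balancing the two yields $\delta\sim N^{(\alpha+\beta)/(1+\beta+k)}$, and dyadic summation produces the base case $p'=p$, $q'=q$ with $s<(1-\alpha)k/(1+\beta+k)$.

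\emph{Step 2 (Integrability gain).} For $p'\ge p$ and $q'\ge q$, augment each dyadic estimate with Bernstein-type integrability upgrades. Transverse to the resonance slab (of $v$-width $\delta/N$), converting $L^q\to L^{q'}$ costs $(\delta/N)^{-d(1/q-1/q')}$. In the $(t,x)$ directions, the spatial frequency localization $|\xi|\sim N$ combined with the resonance-induced temporal frequency constraint $|\tau+v\cdot\xi|\lesssim\delta$ produces a total cost scaling as $N^{(d+\frac{\alpha+\beta+k}{1+\beta+k})(1/p-1/p')}$, in which the fractional factor $\tfrac{\alpha+\beta+k}{1+\beta+k}$ encodes the reduced $t$-regularity gain compared to the $x$-gain in hypoelliptic averaging. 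Reinserting these into the balance of Step 1 and requiring the dyadic series in $N$ to remain summable delivers precisely the three-term condition on $s$ in the theorem; the strict inequality absorbs the usual logarithmic loss at the endpoint, and the constants arising from the compact time support $[-T,T]$ are collected into $C_T$.

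\emph{Main obstacle.} The principal difficulty is handling general $p,q\in[1,\infty]$, which falls outside the Plancherel-friendly $L^2$ setting and so forces the use of real-variable substitutes for Fourier multiplier bounds (maximal functions, Nikolskii and Calderón-Zygmund inequalities). A second delicate point is verifying that a single optimization $\delta=\delta(N)$ simultaneously produces all three exponent contributions with matching fractions $1/(1+\beta+k)$, and in particular that the effective $(t,x)$-dimension comes out as $d+\tfrac{\alpha+\beta+k}{1+\beta+k}$ rather than a nearby expression; this bookkeeping is where the hypoelliptic scaling must be used with care.
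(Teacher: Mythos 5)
Your outline follows the classical averaging-lemma route (Duhamel along free transport, dyadic decomposition in $\xi$, resonant/non-resonant splitting in $\tau+v\cdot\xi$, optimization of the threshold $\delta(N)$), whereas the paper proceeds quite differently: it rewrites the equation as a fractional Kolmogorov equation
\begin{align*}
\p_tf+v\cdot\na_xf+D_v^\sigma f = D_x^\alpha D_v^\beta g + D_v^{\sigma-k} D_v^kf,
\qquad \sigma=\tfrac{\alpha+\beta+k}{1-\alpha},
\end{align*}
and convolves with the explicit fundamental solution $G$, whose derivatives have $L_x^{p_0}L_v^{q_0}$ norms decaying like explicit powers of $t$ (Lemma~\ref{GG0}); Minkowski and Young's inequalities then give both the derivative gain and the $(p,q)\to(p',q')$ upgrade in one stroke, with the strict inequality on $s$ exactly guaranteeing integrability of the time singularity over $[0,2T]$. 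This difference is not cosmetic: the kernel method is what makes the statement valid at the endpoints $p,q\in\{1,\infty\}$, and your scheme has genuine gaps precisely there. First, the non-resonant bound via the multiplier $(\tau+v\cdot\xi)^{-1}\lesssim\delta^{-1}$ is a Plancherel-type estimate; such multipliers are not bounded on $L^1$ or $L^\infty$, and the paper itself recalls (\S\ref{l1endpoint}, \S\ref{lpendpoint}) that singular-integral/multiplier arguments fail at these endpoints. You flag this as the ``main obstacle'' but do not resolve it, and resolving it essentially forces you back to pointwise kernel bounds, i.e.\ the fundamental-solution method. Second, there is no mechanism in your argument for absorbing $D_v^\beta$ acting on $g$: the cutoff in the resonance variable $\tau+v\cdot\xi$ is smooth in \emph{physical} $v$ and in $(\tau,\xi)$, while $D_v^\beta$ is a Fourier multiplier in $\eta$; with no velocity average and only $g\in L^p_{t,x}L^q_v$, the $\beta$ derivatives must land on a kernel, which your decomposition does not provide.

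The second structural gap is the integrability upgrade. Bernstein/Nikolskii raises the Lebesgue exponent only under \emph{frequency} localization; the resonant set $\{|\tau+v\cdot\xi|\le\delta\}$ is a slab in \emph{physical} $v$ of width $\delta/N$, and physical-space localization to a small set lets you lower the exponent (H\"older), not raise it. So the claimed cost $(\delta/N)^{-d(1/q-1/q')}$ for $L^q_v\to L^{q'}_v$ does not follow, and likewise the asserted $(t,x)$ cost $N^{(d+\frac{\alpha+\beta+k}{1+\beta+k})(1/p-1/p')}$ is stated rather than derived. In the paper the effective exponent $d+\frac{\alpha+\beta+k}{1+\beta+k}=d+\sigma/(1+\sigma)\cdot(1+1/\sigma)^{-1}\cdot(\cdots)$ arises transparently from the anisotropic scaling $G(t,x,v)=c\,t^{-d-2d/\sigma}\G(x/t^{1+1/\sigma},v/t^{1/\sigma})$ together with Young's inequality at exponents $1/p'+1=1/p+1/p_0$, $1/q'+1=1/q+1/q_0$. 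To salvage your approach you would need genuine dispersive/smoothing estimates for the localized pieces in place of Bernstein, which again amounts to establishing kernel bounds equivalent to Lemma~\ref{GG0}.
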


More generally, in the presence of inhomogeneous fluxes, our main result is as follows.  
\begin{theorem}\label{reg-u} 
Let the constants $\lambda=1$, $\alpha\in[0,1]$, $\beta\ge0$, $p,q\in(1,\infty)$, $R\ge1$, $s\in(0,1/2]$, $s_0\in[s,2s]$, $r\in[0,1/2)$, $r_0\in[0,2r]$, and $k,l\ge0$ such that 
\begin{align*}
r_0s+s_0r\ge2rs,\quad 
0\le l\le 1-2r+\beta,\quad 
0\le s\le \frac{(1-\alpha)(k+r)}{1+\beta+2k-l}. 
\end{align*}
Assume that the vector field $u\in L_{t,x}^{\infty}(\mathbb{R}^{1+d})$, and the functions $g\in L^{p}_{t,x}L_v^{q}(\mathbb{R}^{1+2d})$ and $f\in L_{t,x}^2H_v^k(\mathbb{R}^{1+2d})$ such that  $D_v^lf\in L_{t,x}^{p_\star}L_v^{q_\star}(\mathbb{R}^{1+2d})$ and  ${\rm supp\;\!} f\subset\mathbb{R}\times\mathbb{R}^d\times B_R$. If $f,g$ satisfy \eqref{KE} with $D_{t,x}^{s_0}u\in L_{t,x}^{a_1}(\mathbb{R}^{1+d})$ and $f\in L_{t,x}^{a_2}H_v^{1-2r+r_0}(\mathbb{R}^{1+2d})$ for the pair $(a_1,a_2)=(2,\infty)$ or $(\infty,2)$, then we have $f\in H_{t,x}^sH_v^{-r}(\mathbb{R}^{1+2d})$ with
\begin{align*}	
\left(1-2r\right)\|f\|_{H_{t,x}^sH_v^{-r}}^2 + \|f\|_{H_{t,x}^sH_v^{-1-r}}^2
\le CC_*^{\alpha+2s} \|D_v^lf\|_{L_{t,x}^{p_\star}L_v^{q_\star}} \|g\|_{L_{t,x}^{p}L_v^{q}}&\\ 
+ CC_*^{2s}\|f\|_{L_{t,x}^2H_v^k}^2 + CC_*^{4s}\left(1-2r\right)^{-1} \|D_{t,x}^{s_0}u\|_{L_{t,x}^{a_1}}^2\|f\|_{L_{t,x}^{a_2}H_v^{1-2r+r_0}}^2&. 
\end{align*}
Here the constant $C>0$ depends only on $d,\beta,k,p,q$, and $C_*:=R +\|u\|_{L_{t,x}^\infty}$. 
\end{theorem}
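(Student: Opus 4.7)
The plan is to carry out the commutator/energy method behind Theorem \ref{reg-0} with the macroscopic drift $u\cdot\na_x$ treated as a controlled perturbation. Rewriting \eqref{KE} with $\lambda=1$ as
\[
(\p_t+v\cdot\na_x)f+\na_x\cdot(uf)=(D_t^\alpha+D_x^\alpha)D_v^\beta g,
\]
I test against $Mf$ for a Fourier multiplier $M$ of bi-order $(s,-r)$ in $(t,x;v)$, chosen exactly as in the proof of Theorem \ref{reg-0} so that the quadratic form $\la(\p_t+v\cdot\na_x)Mf,Mf\ra$ realizes both hypoelliptic-gain norms on the left-hand side of the target inequality, up to familiar commutator errors with the symbol $\tau+v\cdot\xi$. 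All terms that do not involve $u$ are estimated verbatim from the proof of Theorem \ref{reg-0}, contributing the first two summands $CC_*^{\alpha+2s}\|D_v^lf\|_{L^{\px}_{t,x}L_v^{\qx}}\|g\|_{L^p_{t,x}L_v^q}+CC_*^{2s}\|f\|_{L^2_{t,x}H_v^k}^2$; the factor $C_*=R+\|u\|_{L^\infty_{t,x}}$ appears because the region of the kinetic symbol on which the gain is extracted now scales with the effective transport speed $|v|+|u|$.

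The new term is $I_u:=\la M\na_x\cdot(uf),Mf\ra$. Integrating by parts in $x$ and commuting $M$ past multiplication by $u$ give
\[
I_u=-\la u\cdot\na_x Mf,Mf\ra-\la[M,u]f,\na_x Mf\ra.
\]
The first pairing is bounded by $\|u\|_{L^\infty}\|\na_x Mf\|_{L^2}\|Mf\|_{L^2}$ and absorbed into $C_*^{2s}\|f\|_{L^2_{t,x}H_v^k}^2$ via Cauchy--Schwarz. The commutator term is the principal one and is controlled by a Kato--Ponce/Coifman--Meyer fractional-Leibniz inequality for the mixed-order multiplier $M$:
\[
\|[M,u]f\|_{L^2_{t,x,v}}\le CC_*^{2s}\|D_{t,x}^{s_0}u\|_{L^{a_1}_{t,x}}\|f\|_{L^{a_2}_{t,x}H_v^{1-2r+r_0}},
\]
valid for both $(a_1,a_2)\in\{(2,\infty),(\infty,2)\}$; the constraints $s_0\in[s,2s]$, $r_0\in[0,2r]$, and $r_0s+s_0r\ge2rs$ are precisely the interpolation identities needed for the product estimate to close. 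Applying Cauchy--Schwarz and absorbing $\|\na_x Mf\|_{L^2}^2$ into the hypoelliptic gain $\|f\|_{H^s_{t,x}H_v^{-1-r}}^2$ on the left yields the desired bound; the prefactor $(1-2r)^{-1}$ arises from the velocity-weighted Hardy-type estimate needed for this absorption as $r\to1/2$.

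The main obstacle is the commutator estimate for a multiplier $M$ that is positive-order $s$ in $(t,x)$ and negative-order $-r$ in $v$: one must establish a Kato--Ponce-type bound with the $C_*^{2s}$ scaling, uniform over the full admissible parameter range and simultaneously for both pairs $(a_1,a_2)=(2,\infty)$ and $(\infty,2)$. Once this inequality is in place, combining it with the hypoelliptic estimates already developed in the proof of Theorem \ref{reg-0} produces the claimed bound.
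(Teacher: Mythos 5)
Your overall strategy (an $L^2$ commutator/energy method in which the $u$-free terms are handled as in Theorem~\ref{reg-0} and the drift is treated via a commutator with the test multiplier) is the right one, and your reading of the roles of $C_*$ and of the constraints $s_0\in[s,2s]$, $r_0\in[0,2r]$, $r_0s+s_0r\ge 2rs$ is essentially correct. However, your decomposition of the drift term has a genuine gap. Writing $I_u=-\la u\cdot\na_x Mf,Mf\ra-\la [M,u]f,\na_x Mf\ra$ leaves a full $x$-derivative on $Mf$ in \emph{both} pairings, and neither can be closed. For the first pairing: bounding it by $\|u\|_{L^\infty}\|\na_x Mf\|_{L^2}\|Mf\|_{L^2}$ and "absorbing into $C_*^{2s}\|f\|_{L^2_{t,x}H_v^k}^2$" is not possible, since $\|\na_x Mf\|_{L^2}\sim\|f\|_{H^{1+s}_xH_v^{-r}}$ is controlled neither by $\|f\|_{L^2_{t,x}H_v^k}$ nor by the left-hand side; the alternative, integrating by parts to get $\tfrac12\la(\na_x\cdot u)Mf,Mf\ra$, requires a full derivative of $u$, whereas only $D_{t,x}^{s_0}u$ with $s_0\le 2s\le1$ is assumed. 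For the second pairing, even a perfect Kato--Ponce bound on $\|[M,u]f\|_{L^2}$ does not help, because $\|\na_x Mf\|_{L^2}^2$ cannot be absorbed into $\|f\|_{H^s_{t,x}H_v^{-1-r}}^2$: the symbol $|\xi|\la(\tau,\xi)\ra^{s}\la\eta\ra^{-r}$ is not dominated by $\la(\tau,\xi)\ra^{s}\la\eta\ra^{-1-r}$ (one would need $|\xi|\lesssim\la\eta\ra^{-1}$). The whole point of the hypoelliptic gain here is that one only ever controls $s<1$ derivatives in $x$, so no step of the argument may produce an unpaired full $x$-derivative.

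The missing idea is that the commutator must be taken with the \emph{entire} antisymmetrized quadratic form rather than with $M$ alone. In the paper the $u$-contribution arises as $\Im\int\Phi\cdot\hu*_\varrho\hf\,\overline{\hf}$ with the real symbol $\Phi=\chi\,\xi\cdot\eta\,\la\xi\ra^{2s-2}\la\eta\ra^{-2r}\xi$, which has total order $2s$ in $(\tau,\xi)$ (the extra $\xi$ from $\na_x\cdot(uf)$ is compensated by the weight $\xi\cdot\eta\,\la\xi\ra^{2s-2}$ of order $2s-1$) and order $1-2r$ in $\eta$. A symmetrization identity (Lemma~\ref{reg-tri}, following \cite{JLTs}) rewrites this as an integral against the symbol \emph{difference} $\Phi(\varrho+\varrho',\eta)-\Phi(\varrho',\eta)$, which is then split so that $|\varrho|^{s_0}$ falls on $u$, $|\varrho'|^{2s-s_0}$ on one copy of $f$, and $|\eta|^{1-2r}$ is distributed as $|\eta|^{-r_0}$ and $|\eta|^{1-2r+r_0}$ between the two copies of $f$ --- never a full derivative on any factor. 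This yields exactly the trilinear bound $\|D_{t,x}^{s_0}u\|_{L^{a_1}_{t,x}}\|D_{t,x}^{2s-s_0}f\|_{L^2_{t,x}H_v^{-r_0}}\|D_v^{1-2r+r_0}f\|_{L^{a_2}_{t,x}L^2_v}$, after which the condition $r_0s+s_0r\ge2rs$ lets one absorb $\|D_{t,x}^{2s-s_0}f\|_{L^2_{t,x}H_v^{-r_0}}$ into $\|f\|_{L^2}+\|D^s_{t,x}f\|_{L^2_{t,x}H_v^{-r}}$ and close by Cauchy--Schwarz. A second point you gloss over is the time regularity: the commutator $[\la\eta\ra^{1-r},X]$ only produces $\xi$, so the region $c_1|\tau|\gtrsim1+|\xi|$ requires a separate argument using the equation and the compact support in $v$ (this is where $C_*=R+\|u\|_{L^\infty}$ actually enters), which your proposal does not supply.
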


\subsection{Classical velocity averaging results}
The free-streaming transport operator $\partial_t + v\cdot\nabla_x$ is hyperbolic and propagates singularities at finite speeds. The averaging lemmas however ensure regularity for the moments of solutions to purely kinetic transport equations. 
\subsubsection{Classical averaging lemmas}
The classical theory typically states that for the functions $f,g\in L^{p}_{t,x,v}(\mathbb{R}^{1+2d})$ with $p\in(1,2]$ verifying \eqref{KE} with $u=0$, the moment $\rho_\phi[f]$ given by \eqref{af} satisfies 
\begin{align}\label{af-reg}
D_{t,x}^s\left(\rho_\phi[f]\right)\in L_{t,x}^{p}(\mathbb{R}^{1+d}),\quad s:=\frac{1-\alpha}{(1+\beta){p_\star}}. 
\end{align}
One may refer to \cite{GLPS,DL} for $p=2$ and to \cite{DPLM,Bez} for $p\in(1,2]$; see also \cite{DP,West,JV,TT} for more general cases. The classical averaging theory is based primarily on the ellipticity of the symbol of $\partial_t + v\cdot\nabla_x$ outside small neighborhoods of its characteristic directions, which provides regularity when averaged over the velocity variable. Furthermore, \eqref{af-reg} remains valid for general transport operator $\partial_t + a(v)\cdot\nabla_x$ with a smooth vector field $a:\mathbb{R}^d\rightarrow\mathbb{R}^d$ that is nondegenerate in the sense that for any compact subset $K\subset\mathbb{R}^d$, there is $C>0$ such that for any $\iota\in\mathbb{R}$, $\nu\in\mathbb{S}^{d-1}$, $\varepsilon>0$, 
\begin{align}\label{nonde}
\mathcal{L}^d(\left\{v\in K:\,\left|\;\!\iota+a(v)\cdot\nu\right|\le\varepsilon \right\})\le C\varepsilon, 
\end{align}
where $\mathcal{L}^d$ is the Lebesgue measure in $\mathbb{R}^d$; one may refer for instance to \cite[Section~1.5]{BGP}. This idea of exploiting ellipticity of transport operators relies essentially on arguments of characteristics. It provides information about the averaged quantity \eqref{af}, as contrasted with the precise negative Sobolev power in velocity indicated in \cite{JLT} and part~\ref{reg-1} of Theorem~\ref{reg-0}. More importantly, it restricts the possibility of accommodating the macroscopic velocity field $u$ as appeared in \eqref{KE}. 

\subsubsection{Optimality and possible improvements}\label{optimality}
The optimality of \eqref{af-reg} has been discussed in \cite{Ge,Lions1,Lions2}; see also \cite{DP,JP,JV}. Nevertheless, there is still room for improving averaging lemmas when additional information about solutions is known. It has been noticed in \cite{GG,JP,Bouchut,AM1} that assuming extra regularity for solutions in velocity gives better regularity than \eqref{af-reg}. This observation has led to improvements of the regularizing effect for assorted kinetic formulations; see \cite{Pe,JP}. Some sharpness of gain of the number of derivatives on this aspect was presented in \cite{DLW}. Moreover, incorporating stronger integrability properties of the solutions may lead to further improvements. By delving into the dual exponents of integrability between the functions $f$ and $g$ in \eqref{KE}, we can leverage the duality assumption to enhance the result to some extent. 

\subsubsection{Endpoint results}\label{l1endpoint}
Neither regularity like \eqref{af-reg} nor compactness can be expected for the moment $\rho_\phi[f]$ in the endpoint $L^1$ case due to concentrations in the velocity variable as indicated in \cite{GLPS}. A compactness property $\rho_\phi[f]$ was proven in \cite{GSR} to persist under an equi-integrability condition on the velocity variable that effectively rules out the pathological possibility. Relevant generalizations regarding the properties of the solution itself in the $L^1$ setting can be also derived from additional assumptions solely on the velocity variable; see \cite{ALSR,AM1}. 

\subsection{Hypoellipticity}
The phenomenon discussed in \S~\ref{optimality} and \S~\ref{l1endpoint} bears resemblance to the concept of hypoellipticity. The regularity in the time-space variables can be further improved when the solution $f$ to \eqref{KE} possesses additional smoothness in the velocity variable. The investigation of transferring regularity from the velocity variable to other variables for the solution itself has been explored in \cite{GG,Bouchut,ALSR}. 

\subsubsection{Hypoelliptic operators and commutator method}
Recall that the transport operator $\partial_t + v\cdot\nabla_x$ and the diffusion $-\Delta_v$ collectively constitute the so-called Kolmogorov operator $\partial_t + v\cdot\nabla_x-\Delta_v$. It serves as a prototypical hypoelliptic operator, initially examined in \cite{Kol} for its smoothing effects, as evidenced by an explicit calculation of its fundamental solution. A pioneering study of hypoelliptic second-order operators in \cite{Ho} reveals that the regularity property of $\partial_t + v\cdot\nabla_x-\Delta_v$ is related to the bracket condition 
\begin{align}\label{na0}
[\;\!\nabla_v,\;\!\partial_t + v\cdot\nabla_x\;\!]=\nabla_x. 
\end{align}
The arguments of \cite{Ho} are grounded in the characteristics of vector fields, specifically employing the method of fundamental solutions within the $L^2$ framework. Building upon \eqref{na0}, a direct $L^2$ energy-based commutator method was discovered in \cite{OR,Ko} to achieve the hypoelliptic regularity. A simplified version applicable to kinetic transport equations can be found in \cite[Section~3]{Bouchut}. 

\subsubsection{$L^p$ theory and endpoint case}\label{lpendpoint}
In contrast to the aforementioned works within the $L^2$ context, the hypoelliptic second-order counterpart of the Calder\'on-Zygmund theory, encompassing an $L^p$ framework ($1<p<\infty$) with more refined and optimal estimates, was developed in \cite{FS,Fo,RS} essentially through the method of fundamental solutions. For discussions on the analog of fractional order hypoellipticity, one may refer to \cite{Bouchut,HMP,ChenZhang}. It is well-established that the regularity estimates based on the singular integral theory do not extend to the endpoint $L^1$ case; see for instance \cite{Ornstein,KK}. Despite the failure of singular integral operators to be bounded in endpoint spaces, there remains a reasonable expectation for the gain of derivatives of order below the one given by the Calder\'on-Zygmund theory. 
 
\subsection{Averaging lemmas meet hypoellipticity}
Recent observations have shown that hypoelliptic methods can be applied to recover specific averaging lemmas. We aim in this work at shedding more complete light on the regularity properties for solutions to \eqref{KE}. Through the utilization of the commutator and energy methods employed in \cite{Bouchut,JLT,JLTs,AL}, we characterize the velocity averaging in part~\ref{reg-1} of Theorem~\ref{reg-0} and establish the hypoelliptic regularity for the solution itself in part~\ref{reg-2} of Theorem~\ref{reg-0}, which offers insight into the time-space regularization for \eqref{KE} with $u=0$. 

\subsubsection{General advection velocity}\label{inhomo-intro}
While most velocity averaging results bear on the streaming operator $v\cdot\nabla_x$, we examine the operator $f=f(t,x,v)\mapsto\nabla_x\cdot(vf+uf)$ in the presence of the advection velocity $u=u(t,x)$. Previous microlocal analysis techniques used in \cite{Gerard,Gerard91,GG} required the vector field $u$ to be smooth. The quantification of the smoothness for general advection velocity within a perturbative scenario was addressed in \cite{JLTs}. 

We leverage the characteristics-independent nature of the commutator identity~\eqref{na0}. This paves the way for exploiting the noncommutation property 
\begin{align}\label{na}
\begin{aligned}
[\;\!\nabla_v,\;\!\partial_t+\nabla_x\cdot((v+u)\cdot)\;\!]=\nabla_x
\end{aligned}
\end{align} 
for the analysis of \eqref{KE} in the presence of a general macroscopic velocity field $u=u(t,x)$. 
Our work culminates in the formulation of Theorem~\ref{reg-u}, demonstrating a gain in time-space regularity compensated by a corresponding loss of regularity in the velocity variable for solutions. Notably, the regularity required for the advection velocity~$u$ does not necessarily exceed that obtained for moments of the solution $f$ to \eqref{KE}. Note that \eqref{na} crucially relies on the flux $u$ being independent of the variable $v$. For a more general field $U(t,x,v)$ replacing $v+u(t,x)$, this relation deteriorates. In such cases, one can only expect a regularization effect if $U(t,x,v)$ is sufficiently smooth and nondegenerate in an appropriate sense; see, for instance, \cite{Gerard,Gerard91,GG} and \cite{JLTs}. 

\subsubsection{Endpoint regularity}\label{end-intro}
Inspired by the studies in \cite{PP,IS,GM} concerning the gain of integrability for solutions to hypoelliptic equations, we also provide an elementary proof of an endpoint variant of the hypoelliptic regularity in Theorem~\ref{reg-p}. The proof hinges on the use of fundamental solutions of general Kolmogorov equations, which captures the $L^1$ regularity theory of classical averaging lemmas and hypoellipticity (see \S~\ref{l1endpoint} and \S~\ref{lpendpoint}). 

We remark that a limitation of both approaches mentioned in \S\ref{inhomo-intro} and \S\ref{end-intro} lies in our inability to address the general transport operator $\partial_t+a(v)\cdot\nabla_x$ satisfying the nondegenerate condition~\eqref{nonde}. 

\subsection{Organization of the paper}
The outline of the article is as follows. Section~\ref{apps} presents various applications of our main results. In Section \ref{CE}, we detail the commutator and energy estimates, which incorporate the treatment of inhomogeneous fluxes and a regularity transfer mechanism. This section provides the proofs for Theorems \ref{reg-0} and \ref{reg-u}. In Section~\ref{endpoint}, we establish the endpoint hypoelliptic regularity stated in Theorem~\ref{reg-p}. This involves an analysis of fundamental solutions to fractional Kolmogorov equations, with further details provided in Appendix \ref{append}.

\subsection{Notations}
Let us end this section by introducing some basic notations that will be used throughout the rest of the article. We use $\mathcal{F}_{z}$ to denote the Fourier transforms with respect to the variable $z$. Define the Japanese bracket as $\langle\cdot\rangle:=(1+|\cdot|^2)^{1/2}$. We write $X\lesssim Y$ to mean that $X\le CY$ for some constant $C>0$ depending only on $d,\alpha,\beta,k,p,q,p',q',\sigma$, and write $X\lesssim_QY$ to mean that $X\le C_QY$ for some  $C_Q>0$ depending additionally on a specified quantity $Q$ in context. 

\medskip\noindent\textbf{Acknowledgement.}
We would like to thank Fran\c{c}ois Golse, Zexing Li, Cl\'{e}ment Mouhot and Luis Silvestre for helpful discussions. 

\section{Applications}\label{apps}
This section explores how our main theorems presented in \S~\ref{main-thm} apply to the regularity theory of kinetic formulations of hyperbolic conservation laws and general Fokker-Planck-Kolmogorov equations, providing context before delving into the proofs of our main results. 

We consider Burgers' equation with transport in \S~\ref{Burgers}. More general heterogeneous fluxes in scalar conservation laws and the study of related kinetic formulations can be found in \cite{Dalibard}. The one-dimensional Euler system of isentropic gas dynamics is discussed in \S~\ref{Euler}. Further historical context can be found in \cite{Pe,Dafermos}. Applications to advection-diffusion equations of the general Fokker-Planck-Kolmogorov type are addressed in \S~\ref{kolmogorov-app}. Fractional Sobolev regularity for such equations with the free-streaming transport operator $\partial_t+v\cdot\nabla_x$ has been extensively studied; see for instance \cite{Bouchut,IS}. Other models involving advection velocity fields that depend on the variables $t$ and $x$ arise, for example, in the study of Nordstr\"{o}m theory \cite{Nords} and active suspensions \cite{SS}. 

\subsection{Kinetic formulations}\label{kineticfor}
Various motivations and applications of Theorem~\ref{reg-u} arise in addressing regularity problems within kinetic formulations. In the kinetic approach pioneered  in \cite{LPT1,LPT2}, certain nonlinear conservation laws can be lifted to kinds of linear equations by introducing an extra variable. 

\subsubsection{Burgers' equation with transport}\label{Burgers}
Let us consider the solution $w=w(t,x)$ to Burgers' equation with a rough transport $u=u(t,x)\in L^\infty\cap H_{loc}^{s_0}(\mathbb{R}_+\times\mathbb{R})$ for some $s_0>0$, 
\begin{align}\label{burgers-trans}
	\partial_tw+\partial_x\left(w^2/2+uw\right)=0 {\quad\rm in\ }\mathbb{R}_+\times\mathbb{R}.  
\end{align}
The theory of kinetic formulations developed in \cite{LPT1} (see also \cite{Pe}) states that $w\in C(\mathbb{R}_+;L^1(\mathbb{R}))$ is an entropy solution to \eqref{burgers-trans} if and only if there is a nonnegative Radon measure $\mu=\mu(t,x,v)$ such that  
\begin{align}\label{burgers-kinetic}
\partial_tf+\partial_x\left(vf+uf\right)=\partial_v\mu {\quad\rm in\ }\mathbb{R}_+\times\mathbb{R}\times\mathbb{R}.  
\end{align}
Here $f=f(t,x,v)$ is the local Maxwellian defined as
\begin{align}\label{def-max}
f(t,x,v)=\begin{cases}1,&\text{ for }\ 0<v<u(t,x), \\ 
	-1, & \text{ for }\ u(t,x)<v<0, \\ 
	0, & \text { otherwise. }\end{cases}
\end{align}
It therefore satisfies $w=\int_\mathbb{R} f\dif v$. Furthermore, if the initial data $w|_{t=0}=w_0\in L^1\cap L^\infty(\mathbb{R})$, then $w(t,\cdot)\in L^1\cap L^\infty(\mathbb{R})$ and the total mass of the Radon measure $\mu$ is bounded. 

\begin{theorem}
Let $u\in L^\infty\cap H_{loc}^{s_0}(\mathbb{R}_+\times\mathbb{R})$ for some $s_0>0$. For any entropy solution $w\in C(\mathbb{R}_+;L^1(\mathbb{R}))$ to \eqref{burgers-trans} with $w|_{t=0}=w_0\in L^1\cap L^\infty(\mathbb{R})$, we have $w\in H_{loc}^s(\mathbb{R}_+\times\mathbb{R})$ for all $s\in(0,\min\{s_0,1/3\})$.  
\end{theorem}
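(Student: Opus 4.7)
The plan is to apply Theorem~\ref{reg-u} to the kinetic formulation \eqref{burgers-kinetic} and then recover regularity of $w=\int f\dif v$ from that of the Maxwellian $f$. One identifies $d=1$, $\alpha=0$, $\beta=1$, $\lambda=1$, and views the right-hand side $\p_v\mu$ as $D_v^{\beta}g$ (modulo a Hilbert transform in $v$ relating $\p_v$ and $D_v$). From the definition~\eqref{def-max} the Maxwellian enjoys several favourable structural features: $\|f\|_{L^{\infty}_{t,x,v}}\le1$, the support satisfies $\supp f\subset\R_+\times\R\times B_R$ for $R=1+\|u\|_{L^{\infty}_{t,x}}$, and each slice $f(t,x,\cdot)$ is $\pm$ a characteristic function of a bounded interval of length at most $R$. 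A direct Fourier calculation on such indicators yields $f\in L^{\infty}_{t,x}H^{\sigma}_v(\R^{1+2d})$ for every $\sigma<1/2$, with norm controlled by a function of $R$.

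Localizing by a cutoff $\chi(t,x)\in C_c^{\infty}(\R_+\times\R)$, the product $\chi f$ satisfies an equation of the same form with source $\chi\p_v\mu$ plus a commutator term $(\p_t\chi+(v+u)\p_x\chi)f$; since the latter is compactly supported in $v$, it can be written as $\p_v\tilde\mu$ for a signed measure $\tilde\mu$ of finite total variation, and so absorbed into the source. I would then apply Theorem~\ref{reg-u} to $\chi f$ with the parameter choices
\begin{align*}
k=\tfrac{1}{2}-\epsilon,\quad l=0,\quad r=\tfrac{1}{2}-\epsilon,\quad r_0=\tfrac{1}{2}-3\epsilon,\quad(a_1,a_2)=(2,\infty),
\end{align*}
and with the theorem's $s_0$-parameter chosen in $(s,\min(s_{\mathrm{B}},2s)]$, writing $s_{\mathrm{B}}$ for the regularity exponent appearing in the Burgers hypothesis. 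The constraints are easily checked: $r_0\in[0,2r]$ and $l\le 1-2r+\beta$ hold for small $\epsilon$; the coupling $r_0s+s_0r\ge 2rs$ simplifies to $s_0\ge s(1+O(\epsilon))$; and the main growth condition $s\le(k+r)/(2+2k-l)$ approaches $1/3$ as $\epsilon\to 0$. The velocity-regularity hypothesis $f\in L^{\infty}_{t,x}H^{1-2r+r_0}_v=L^{\infty}_{t,x}H^{1/2-\epsilon}_v$ is in hand, and $D^{s_0}_{t,x}(\chi u)\in L^{2}_{t,x}$ follows from $u\in H^{s_{\mathrm{B}}}_{loc}$.

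Theorem~\ref{reg-u} then gives $\chi f\in H^{s}_{t,x}H^{-r}_v$. Since $f$ is uniformly compactly supported in $v$, pairing with a fixed $\phi\in C_c^{\infty}(\R)$ equal to $1$ on $B_R$ is continuous from $H^{-r}_v$ to $\R$, whence $\chi w=\int\chi f\,\phi\dif v\in H^{s}_{t,x}$ for every admissible $\chi$; letting $\chi$ range over a covering of $\R_+\times\R$ delivers $w\in H^{s}_{loc}(\R_+\times\R)$ for all $s<\min(s_{\mathrm{B}},1/3)$. The main obstacle will be to rigorously accommodate the measure-valued source $\mu$ within Theorem~\ref{reg-u}, which is phrased for $g\in L^{p}_{t,x}L^{q}_v$ with $p,q>1$; the intended remedy is to smooth $\mu$ to $\mu_\delta\in L^{\infty}$ of uniformly bounded mass, apply the theorem to the regularized problem, and pass to the limit by exploiting that the duality pairing $\langle f,\mu\rangle$ underlying the commutator identity in the proof is controlled by $\|f\|_{L^{\infty}}\|\mu\|_{\mathcal{M}}$. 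A secondary nuisance is the singular constant $(1-2r)^{-1}=(2\epsilon)^{-1}$ in the theorem's estimate, which however multiplies only the finite quantity $\|f\|^{2}_{L^{\infty}_{t,x}H^{1-2r+r_0}_v}$ and so remains bounded for each fixed $\epsilon>0$.
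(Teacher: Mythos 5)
Your overall strategy coincides with the paper's: apply Theorem~\ref{reg-u} to the kinetic formulation \eqref{burgers-kinetic} with $k=r=1/2-\epsilon$, $l=0$, $r_0$ slightly below $1/2$, $(a_1,a_2)=(2,\infty)$, use that the Maxwellian \eqref{def-max} is an indicator in $v$ to obtain $f\in L^\infty_{t,x}H^{1-2r+r_0}_v$ with $1-2r+r_0<1/2$, and recover $w=\int_\R f\dif v$ at the end. Your parameter bookkeeping (the exponent tending to $1/3$, the coupling $r_0s+s_0r\ge 2rs$ forcing $s<s_0$, the localization in $(t,x)$) is essentially correct.

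The genuine gap is the treatment of the measure-valued source $\p_v\mu$, which is exactly the point where the paper proceeds differently. You take $\alpha=0$, $\beta=1$, so Theorem~\ref{reg-u} requires $g\in L^p_{t,x}L^q_v$ with $p,q>1$, and you propose to mollify $\mu$ and pass to the limit, claiming the relevant pairing is controlled by $\|f\|_{L^\infty}$ times the total variation of $\mu$. This does not close: the right-hand side of the estimate in Theorem~\ref{reg-u} carries the factor $\|g\|_{L^p_{t,x}L^q_v}$ with $p,q>1$, which is not uniformly bounded along mollifications of a singular measure; and the pairing in question (see \eqref{Mikh}--\eqref{hand2} in the proof of Proposition~\ref{reg-est}) has the form $\langle M\widehat{g},\overline{|\eta|^l\hf}\rangle$ for a Mikhlin-type multiplier $M$, so bounding it by the mass of $\mu$ would require $M$ (equivalently its adjoint) to be bounded at the endpoint $L^1$/$L^\infty$, which is false in general --- this is the endpoint failure the paper itself recalls in \S~\ref{l1endpoint} and \S~\ref{lpendpoint}, and the same objection applies to your ``modulo a Hilbert transform'' reduction of $\p_v$ to $D_v$ acting on a measure. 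The paper's device is to take $\alpha=\varepsilon^2>0$ and $\beta=1+\varepsilon^2$: since $W^{\alpha,\px}_{t,x,v}(\R^{3})$ embeds into the bounded continuous functions for $\px>3/\alpha$, the bounded measure $\mu$ lies in $W^{-\alpha,p}_{t,x,v}$ for every $1<p<(1-\alpha/3)^{-1}$, i.e.\ one trades an arbitrarily small number of derivatives on the source for the integrability $p>1$ that the $L^p$ multiplier theory demands. This perturbation is harmless in the exponent count, since $(1-\alpha)(k+r)/(1+\beta+2k-l)$ still tends to $1/3$ as $\varepsilon\to0$. Without this step (or an equivalent substitute) your argument does not go through.
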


\begin{proof}
Without loss of generality, we may assume that $s_0\in(0,1/3]$. We aim to show that $f\in H_{t,x}^{s_0-\varepsilon}H_v^{\varepsilon-1/2}$ for any $\varepsilon\in(0,s_0/2]$. To achieve this, we are going to apply Theorem~\ref{reg-u} to the equation \eqref{burgers-kinetic} with parameters $p=q\in(1,\infty)$, $\alpha=\varepsilon^2$, $\beta=1+\varepsilon^2$, $l=0$, $k=r=1/2-\varepsilon$, $r_0=1/2-5\varepsilon/2$ and $(a_1,a_2)=(2,\infty)$. First, since $1-2r+r_0=1/2-\varepsilon/2<1/2$, and $u\in L_{t,x}^\infty$, and $f$ is an indicator function in $v$ due to \eqref{def-max}, we know that locally $f\in L_{t,x,v}^\infty\cap L_{t,x}^\infty H_v^{1-2r+r_0}$. Next, regarding to the right-hand side of \eqref{burgers-kinetic}, the bounded Radon measure $\mu\in W_{t,x,v}^{-\alpha,p}$ for any $1<p<(1-\alpha/3)^{-1}$, since $W_{t,x,v}^{\alpha,{p_\star}}$ embeds into the space of continuous bounded functions for ${p_\star}>3/\alpha$. Now, we verify the parameter conditions of Theorem~\ref{reg-u} with $s=s_0-\varepsilon$: 
\begin{align*}
\frac{r_0}{r}+\frac{s_0}{s}=2-\frac{3\varepsilon}{1-2\varepsilon}+\frac{\varepsilon}{s_0-\varepsilon}\ge 2-\frac{3\varepsilon}{1-2\varepsilon}+\frac{\varepsilon}{1/3-\varepsilon}\ge2, 
\end{align*}
that is, $r_0s+rs_0\ge2rs$; besides, by choosing $\alpha=\varepsilon^2$, $\beta=1+\varepsilon^2$, $l=0$, $k=r=1/2-\varepsilon$, 
\begin{align*}
\frac{(1-\alpha)(k+r)}{1+\beta+2k-l}=\frac{(1-\varepsilon^2)(1-2\varepsilon)}{3+\varepsilon^2+2\varepsilon}\ge\frac{(1-\varepsilon)(1-2\varepsilon)}{3}\ge \frac{1}{3}-\varepsilon. 
\end{align*}
Applying Theorem~\ref{reg-u} to \eqref{burgers-kinetic} with the above specified parameters then yields that locally $D_{t,x}^sf\in L_{t,x}^2H_v^{\varepsilon-1/2}$. We hence obtain the local regularity for the averaged quantity $w=\int_\mathbb{R} f\dif v$. 
\end{proof}

\subsubsection{Isentropic gas dynamics}\label{Euler}
One may also observe conditional regularity results for solutions to the one-dimensional Euler system of isentropic gas dynamics, with its kinetic formulation introduced in \cite{LPT2}. Specifically, we are concerned with the following $2\times2$ hyperbolic system of conservation laws,
\begin{align}\label{isen}
\left\{\begin{aligned}
\ &\partial_t\rho+\partial_x(\rho u)=0, \\
\ &\partial_t(\rho u)+\partial_x(\rho u^2+p(\rho))=0, \\
\end{aligned}\right. 
\end{align} 
where the unknowns are the (nonnegative) density $\rho=\rho(t,x)$ and the velocity field $u=u(t,x)$ for $(t,x)\in\mathbb{R}_+\times\mathbb{R}$, and the pressure $p(\rho)=\frac{(\gamma-1)^2}{4\gamma}\rho^\gamma$ for $\gamma\in(1,3]$. According to \cite{LPT2,Pe}, the system \eqref{isen} admits the kinetic formulation with respect to its entropy solution $(\rho,u)$, 
\begin{align}\label{isen-kin}
\partial_t f+ \partial_x\left(\theta vf+(1-\theta)uf\right) =-\partial^2_v\mu.  
\end{align} 
Here $\mu=\mu(t,x,v)$ is a nonnegative bounded measure, and the local Maxwellian $f=f(t,x,v)$, parameterized by $\rho,u$ and associated with the constant $\theta\in(0,1]$, is given by 
\begin{align*} f=C_\theta\big[\rho^{2\theta}-(v-u)^2\big]_+^\frac{1-\theta}{2\theta} {\quad\rm for\ \ } \theta=\frac{\gamma-1}{2},  
\end{align*}
where $[\cdot]_+$ denotes the positive part, and $C_\theta>0$ is the normalizing constant ensuring that one can recover the density $\rho=\int_\mathbb{R} f\dif v$ and the momentum $\rho u=\int_\mathbb{R} vf\dif v$ in \eqref{isen}. In particular when $\theta=1$, we have $f=\frac{1}{2}\mathds{1}_{[u-\rho,u+\rho]}(v)$. 

Unfortunately, despite the insights provided by Theorem~\ref{reg-u}, establishing unconditional regularity for $f$ satisfying \eqref{isen-kin} remains challenging for general $\gamma\in(1,3)$. 

In the purely kinetic advection regime where $\gamma=3$ or equivalently, $\theta=1$, we can directly apply part~\ref{reg-1} of Theorem~\ref{reg-0} to \eqref{isen-kin} with parameters $p=q\approx1$, $\alpha\approx0$, $\beta\approx2$, $k\approx1/2$, and $l=0$ to obtain the local regularity $f\in H_{t,x}^sH_v^{-3/2}$ for any $s<1/4$, which thus implies that locally $\rho,\rho u\in H_{t,x}^s$ for all $s<1/4$.  It is a slight improvement compared to the results in \cite{LPT2,JP}. 

\subsection{General Fokker-Planck-Kolmogorov equations}\label{kolmogorov-app}
Let us consider a class of hypoelliptic equations whose solutions naturally exhibit regularity in the velocity variable as a consequence of energy estimates. The hypoelliptic regularization results from \S~\ref{main-thm} then provide the time-space regularity for the distribution function. 

Let $\sigma>0$. We are interested in the following advection-diffusion equation for an unknown distribution function $f=f(t,x,v)$ with $(t,x,v)\in\mathbb{R}\times\mathbb{R}^d\times\mathbb{R}^d$, 
\begin{align}\label{gkol}
\partial_tf+v\cdot\nabla_xf+u\cdot\nabla_xf+(-\Delta_v)^\sigma f=S, 
\end{align} 
where $u=u(t.x):\mathbb{R}\times\mathbb{R}^d\rightarrow\mathbb{R}^d$ is a divergence-free vector field (i.e., $\nabla_x\cdot u=0$) and $S=S(t,x,v)$ is a source term. 

\begin{theorem}
Let $\sigma,s_0>0$, and let the function $f\in L_t^\infty L_{x,v}^2\cap L_{t,x}^2H_v^\sigma(\mathbb{R}^{1+2d})$. Suppose that $f$ satisfies $(\partial_t+v\cdot\nabla_x+u\cdot\nabla_x)f\in L_{t,x}^2H_v^{-\sigma}(\mathbb{R}^{1+2d})$ as well as \eqref{gkol} with $u,D_{t,x}^{s_0}u\in L^\infty(\mathbb{R}^{1+d})$ and $S\in L_{t,x}^2H_{v}^{-\sigma}(\mathbb{R}^{1+2d})$. Assume further that $f$ is localized so that ${\rm supp\;\!} f\subset[-R,R]\times\mathbb{R}^d\times B_R$ for some $R\ge1$. Then we have $f\in L_{v}^2H_{t,x}^s(\mathbb{R}^{1+2d})$ for any 
\begin{align*}
s\in\Big(0,\frac{\sigma}{1+2\sigma}\Big] {\quad with\quad}
\begin{cases} \,s<\frac{s_0\sigma}{(1-\sigma)(1+2\sigma)},&\text{ if }\ \, \sigma\in(0,\frac{1}{2}], \\ 
\, s\le s_0\min\{1,\sigma\}, & \text{ if }\ \, \sigma>\frac{1}{2}. 
\end{cases}
\end{align*}
More precisely, there is some $C_0>0$ depending only on $\sigma,s_0,s,R$ such that 
\begin{align*}
\|f\|_{L_{v}^2H_{t,x}^s}
\le C_0\big(1+\|u\|_{L_{t,x}^\infty}^{2s}\big)\big(1+\|D_{t,x}^{s_0}u\|_{L_{t,x}^\infty}\big) \|S\|_{L_{t,x}^2H_{v}^{-\sigma}}. 
\end{align*}
\end{theorem}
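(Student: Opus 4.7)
My plan is to apply Theorem~\ref{reg-u} to \eqref{gkol} after recasting the equation as a kinetic transport problem with prescribed inhomogeneous flux, then promote the resulting $H_{t,x}^s H_v^{-r}$ estimate to the target $L_v^2 H_{t,x}^{s'}$ by an interpolation in the velocity variable. The velocity regularity $D_v^\sigma f\in L_{t,x,v}^2$ needed on the input side will be supplied by an elementary $L^2$ energy estimate.

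First, I would test \eqref{gkol} against $f$: both transport terms $v\cdot\na_xf$ and $u\cdot\na_xf$ integrate to zero (the first trivially, the second by $\na_x\cdot u=0$), which yields $\tfrac{d}{dt}\|f(t)\|_{L_{x,v}^2}^2 + 2\|D_v^\sigma f(t)\|_{L_{x,v}^2}^2 = 2\int Sf\,dxdv$. Then Cauchy--Schwarz and Young on the right, together with Gronwall on $[-R,R]$ (using $f(-R)\equiv 0$ from the support assumption), gives
\begin{equation*}
    \|f\|_{L_t^\infty L_{x,v}^2}^2 + \|D_v^\sigma f\|_{L_{t,x,v}^2}^2 \lesssim_R \|S\|_{L_{t,x}^2 H_v^{-\sigma}}^2.
\end{equation*}

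Next, I rewrite \eqref{gkol} as the kinetic transport equation
\begin{equation*}
    \p_tf + \na_x\cdot\bigl((v+u)f\bigr) = 2\, D_v^\sigma g, \qquad g := \tfrac12\bigl(D_v^{-\sigma}S - D_v^\sigma f\bigr)\in L_{t,x,v}^2,
\end{equation*}
which fits the form \eqref{KE} with $\lambda=1$, $\alpha=0$, $\beta=\sigma$ and $\|g\|_{L^2}\lesssim\|S\|_{L_{t,x}^2 H_v^{-\sigma}}$. I then invoke Theorem~\ref{reg-u} with $p=q=2$, $k=l=\sigma$, and pair $(a_1,a_2)=(\infty,2)$: the input assumptions on $f$ and $g$ follow from the previous step, and $D_{t,x}^{s_0'}u\in L_{t,x}^\infty$ for any $s_0'\in[0,s_0]$ comes from interpolation between $u$ and $D_{t,x}^{s_0}u$ in $L_{t,x}^\infty$. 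The conclusion is $f\in H_{t,x}^sH_v^{-r}$ whenever the quadruple $(s,r,r_0,s_0')$ satisfies
\begin{equation*}
    s\le\tfrac{\sigma+r}{1+2\sigma},\quad r\in[0,\tfrac12),\quad 0\le r_0\le 2r-1+\sigma,\quad r_0s+s_0'r\ge 2rs,\quad s\le s_0'\le\min(2s,s_0).
\end{equation*}

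Finally, interpolating this $H_{t,x}^sH_v^{-r}$ bound against the energy estimate $L_{t,x}^2 H_v^\sigma$ with weight $\theta=\sigma/(\sigma+r)$ yields $f\in H_{t,x}^{s'}L_v^2$ with $s'=\sigma s/(\sigma+r)$. Optimizing the quadruple splits into three regimes: when $\sigma\ge 1$, take $r=r_0=0$ (no interpolation loss) to obtain $s'=s\le\min(s_0,\sigma/(1+2\sigma))$; when $1/2<\sigma<1$, take $r=r_0=1-\sigma$ and $s_0'=s$, producing $s'=\sigma s\le\sigma\min(s_0,1/(1+2\sigma))$; when $\sigma\le 1/2$, take $r_0=2r-1+\sigma$, $s_0'=s(1-\sigma)/r$ and let $r\to 1/2^-$, yielding the strict bound $s'<\sigma s_0/\bigl((1-\sigma)(1+2\sigma)\bigr)$. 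The main technical obstacle is the simultaneous saturation of the four algebraic constraints of Theorem~\ref{reg-u} in each regime; the $(1-\sigma)^{-1}$ factor and the strictness in the low-diffusion case trace back to the necessary endpoint $r\to 1/2^-$, while the $\min(1,\sigma)$ factor for $\sigma>1/2$ comes from the tight coupling $s_0'=s$ forced by $r=1-\sigma$.
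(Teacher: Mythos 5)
Your proposal is correct and follows essentially the same route as the paper: an $L^2$ energy estimate supplying $\|f\|_{L^2_{t,x}H_v^\sigma}\lesssim_R\|S\|_{L^2_{t,x}H_v^{-\sigma}}$, an application of Theorem~\ref{reg-u} with $\alpha=0$, $\beta=k=l=\sigma$, $(a_1,a_2)=(\infty,2)$, and a final interpolation in $v$ with weight $\sigma/(\sigma+r)$. Your three-regime optimization matches the paper's single $\delta$-parametrization ($r=1/2-\delta/2$, $r_0=\sigma-\delta=2r-1+\sigma$, with $\delta=2\sigma-1$ when $\sigma>1/2$ and $\delta\to0^+$ when $\sigma\le1/2$), and your explicit remark on lowering $s_0$ by interpolating $u$ against $D_{t,x}^{s_0}u$ in $L^\infty$ fills in a step the paper leaves implicit.
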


\begin{proof}
Integrating \eqref{gkol} against $f$ yields that  
\begin{align*}
\frac{1}{2}\int_{\mathbb{R}^{1+2d}}(\partial_t+v\cdot\nabla_x+u\cdot\nabla_x)f^2+\int_{\mathbb{R}^{1+2d}}|D_v^\sigma f|^2=\int_{\mathbb{R}^{1+2d}}Sf. 
\end{align*} 
Using the divergence-free condition for $u$ and integration by parts, we have 
\begin{align*}
\frac{1}{2}\|f\|_{L_t^\infty L_{x,v}^2}^2 + \|D_v^\sigma f\|_{L_{t,x,v}^2}^2 
\le 2\|S\|_{L_{t,x}^2H_{v}^{-\sigma}}\big(\|f\|_{L_{t,x,v}^2}+\|D_v^\sigma f\|_{L_{t,x,v}^2}\big). 
\end{align*} 
By the Cauchy-Schwarz inequality and the fact that $f$ is supported in the time interval $t\in[-R,R]$ with $R\ge1$, we derive the energy estimate 
\begin{align}\label{gkol-energy}
\|f\|_{L_{t,x,v}^2} + \|D_v^\sigma f\|_{L_{t,x,v}^2} \lesssim R\|S\|_{L_{t,x}^2H_{v}^{-\sigma}}. 
\end{align} 
Let us assume without loss of generality that $\sigma\in(0,1]$, and take $\delta\in(0,\sigma]$ to be determined. Given that $u$ is divergence-free, we apply Theorem~\ref{reg-u} to \eqref{gkol} with parameters $p=q=2$, $\alpha=0$, $\beta=k=l=\sigma$, $r=1/2-\delta/2$, $r_0=\sigma-\delta$ and $(a_1,a_2)=(\infty,2)$. To be more precise, we define 
\begin{align}\label{gkol-s}
s':=\frac{(1-\alpha)(k+r)}{1+\beta+2k-l}=\frac{2\sigma+1-\delta}{2+4\sigma}
{\quad\rm and\quad} C_*:=R +\|u\|_{L_{t,x}^\infty}. 
\end{align} 
If the condition $r_0s'+s_0r\ge2rs'$ is satisfied, which is equivalent to
\begin{align}\label{gkol-ss}
(1-\delta)s_0\ge2(1-\sigma)s', 
\end{align} 
then we have 
\begin{align}\label{gkol-regu}
\begin{aligned}
\sqrt{\delta}\|f\|_{H_{t,x}^{s'}H_v^{-r}}
\lesssim&\ C_*^{s'}\|S\|_{L_{t,x}^2H_{v}^{-\sigma}} + C_*^{s'} \|f\|_{L_{t,x}^2H_v^\sigma} \\
&\,+C_*^{2s'}\delta^{-\frac{1}{2}}\|D_{t,x}^{s_0}u\|_{L_{t,x}^\infty}\|f\|_{L_{t,x}^2H_v^\sigma}. 
\end{aligned}
\end{align}
Gathering \eqref{gkol-energy} and \eqref{gkol-regu}, we obtain
\begin{align*}
\|f\|_{H_{t,x}^{s'}H_v^{-r}}
\lesssim_{\delta,R} C_*^{s'} \big(1+C_*^{s'}\|D_{t,x}^{s_0}u\|_{L_{t,x}^\infty}\big) \|S\|_{L_{t,x}^2H_{v}^{-\sigma}}. 
\end{align*}
In view of \eqref{gkol-s}, we have the interpolation  
\begin{align*}	
\|f\|_{L_{v}^2H_{t,x}^s} \le \|f\|_{H_{t,x}^{s'}H_v^{-r}}^{\vartheta} \|f\|_{L_{t,x}^2H_v^\sigma}^{1-\vartheta} {\quad\rm for\ \ } \vartheta=\frac{\sigma}{r+\sigma},\ \ s\le\vartheta s'=\frac{\sigma}{1+2\sigma}. 
\end{align*}
Combining the above two estimates with \eqref{gkol-energy}, we deduce that for any $0\le s\le\vartheta s'$ with $s'$ satisfying \eqref{gkol-s} and \eqref{gkol-ss}, 
\begin{align*}
\|f\|_{L_{v}^2H_{t,x}^s}
\lesssim_{\delta,R} C_*^{s} \big(1 + C_*^{s} \|D_{t,x}^{s_0}u\|_{L_{t,x}^\infty}^\vartheta\big) \|S\|_{L_{t,x}^2H_{v}^{-\sigma}}. 
\end{align*}
Considering the condition~\eqref{gkol-ss}, when $\sigma>1/2$, we can choose $s_0=s'$ and $\delta=2\sigma-1\in(0,\sigma]$ so that 
\begin{align*}	
r=\frac{1}{2}-\frac{\delta}{2}=1-\sigma {\quad\rm and\quad} s\le\vartheta s'=\frac{s_0\sigma}{r+\sigma} = s_0\sigma.  
\end{align*}
For $\sigma\in(0,1/2]$, the condition~\eqref{gkol-ss} is equivalent to
\begin{align*}	
\vartheta s'\le \frac{\sigma}{r+\sigma} \frac{s_0(1-\delta)}{2(1-\sigma)} =\frac{s_0\sigma}{1-\sigma} \frac{1-\delta}{1-\delta+2\sigma}. 
\end{align*}
Therefore, for any $s<\frac{s_0\sigma}{(1-\sigma)(1+2\sigma)}$ such that $s\le\vartheta s'$, there is some $\delta\in(0,\sigma]$ satisfying the above condition. This concludes the proof. 
\end{proof}

Focusing on the second-order instance of \eqref{gkol} with $\sigma=1$, we have the following result. 
\begin{corollary}\label{kol-sobolev}
For the $d\times d$ real symmetric matrix $A=A(t,x,v)$ and the $d$-dimensional real vectors $B=B(t,x,v)$ and $u=u(t,x)$, we assume that there is some constant $\Lambda>1$ such that for any $(t,x,v)\in\mathbb{R}^{1+2d}$, 
\begin{align*}
\Lambda^{-1}|\xi|^2\le A\;\!\xi\cdot\xi\le\Lambda|\xi|^2{\quad\rm for\ any\ } \xi\in\mathbb{R}^d,\quad |B|+|u|\le\Lambda. 
\end{align*} 
Suppose that $u$ is divergence-free, and $u,D_{t,x}^{s}u\in L^\infty(\mathbb{R}^{1+d})$ for some $s\in(0,1/3]$, and $S\in L_{t,x,v}^2(\mathbb{R}^{1+2d})$. If the function $f\in L_t^\infty L_{x,v}^2\cap L_{t,x}^2H_v^1(\mathbb{R}^{1+2d})$ is localized so that ${\rm supp\;\!} f\subset[-R,R]\times\mathbb{R}^d\times B_R$ for some $R\ge1$, and satisfies $(\partial_t+v\cdot\nabla_x+u\cdot\nabla_x)f\in L_{t,x}^2H_v^{-1}(\mathbb{R}^{1+2d})$ as well as
\begin{align}\label{gkol2}
\partial_tf+v\cdot\nabla_xf+u\cdot\nabla_xf=\nabla_v\cdot(A\nabla_vf)+B\cdot\nabla_vf+S, 
\end{align} 
then there is some $C_\Lambda>0$ depending only on $s,R,\Lambda$ such that  
\begin{align*}
\|f\|_{L_{v}^2H_{t,x}^s}
\le C_\Lambda \big(1 + \|D_{t,x}^{s}u\|_{L_{t,x}^\infty}\big) \|S\|_{L_{t,x,v}^2}. 
\end{align*}
\end{corollary}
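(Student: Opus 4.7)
The strategy is to reduce the equation to a perturbation of~\eqref{gkol} with $\sigma=1$ and then invoke the preceding theorem together with a self-contained energy estimate.

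First I would rewrite the equation in the form
$$\p_t f + v\cdot\na_x f + u\cdot\na_x f + (-\Delta_v) f = S',\quad S' := \na_v\cdot((A-I)\na_v f) + B\cdot\na_v f + S,$$
by moving the non-Laplacian parts of the diffusion and the drift into an effective source. The uniform ellipticity and boundedness assumptions \eqref{gkol2} give $\|A-I\|_{L^\infty},\,\|B\|_{L^\infty}\lesssim\Lambda$. Since $\na_v\cdot((A-I)\na_v f)$ is the $v$-divergence of an $L^2$ vector field and $B\cdot\na_v f$ is bounded pointwise by $\Lambda|\na_v f|$, one has
$$\|S'\|_{L^2_{t,x}H^{-1}_v}\;\lesssim_\Lambda\;\|\na_v f\|_{L^2_{t,x,v}}+\|S\|_{L^2_{t,x,v}}.$$

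Next I would carry out a closed energy estimate by testing the original equation against $f$. The $\p_t$ contribution vanishes after time-integration thanks to the compact temporal support of $f$; the streaming term $v\cdot\na_x f^2$ integrates to zero trivially in $x$; and $u\cdot\na_x f^2$ integrates to zero thanks to $\na_x\cdot u=0$. The elliptic term yields coercivity $\ge\Lambda^{-1}\|\na_v f\|_{L^2}^2$, while the drift contribution $\int B\cdot\na_v f\,f$ is absorbed via Cauchy--Schwarz and Young at the cost of a zeroth-order term of size $\Lambda^3\|f\|_{L^2}^2$. A Gr\"onwall argument in $t$, initialised at $t=-R$ where $f$ vanishes, then produces
$$\|f\|_{L^\infty_tL^2_{x,v}} + \|\na_v f\|_{L^2_{t,x,v}}\;\lesssim_{\Lambda,R}\;\|S\|_{L^2_{t,x,v}}.$$

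Finally I would invoke the preceding theorem applied to the reformulated equation with $\sigma=1$ and $s_0=s\in(0,1/3]$. Since $\sigma=1>1/2$, the admissibility condition in the theorem reduces to $s\le s_0\min\{1,\sigma\}=s_0$, and the endpoint $s=\sigma/(1+2\sigma)=1/3$ is allowed, so the full range $s\in(0,1/3]$ is covered. Combining the theorem's estimate with the bound on $\|S'\|_{L^2_{t,x}H^{-1}_v}$, the energy inequality, and the pointwise bound $\|u\|_{L^\infty}\le\Lambda$ yields
$$\|f\|_{L^2_vH^s_{t,x}}\;\le\;C_\Lambda\bigl(1+\|D^s_{t,x}u\|_{L^\infty}\bigr)\|S\|_{L^2_{t,x,v}},$$
with $C_\Lambda$ depending only on $s,R,\Lambda$. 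The main obstacle is bookkeeping: the Gr\"onwall step introduces a factor like $e^{\Lambda^3 R}$ that must be absorbed into $C_\Lambda$ without damaging the linear dependence on $\|D^s_{t,x}u\|_{L^\infty}$, and one must verify that the divergence-form perturbation $\na_v\cdot((A-I)\na_v f)$ truly fits into the source space $L^2_{t,x}H^{-1}_v$ required by Theorem~\ref{reg-u}. Both points are dispatched by the reformulation above, so no further structural ingredient is needed.
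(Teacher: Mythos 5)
Your proposal is correct and follows essentially the route the paper intends (the paper leaves the proof implicit, merely noting that the corollary is the $\sigma=1$ instance of the preceding theorem): you rewrite the diffusion as $-\Delta_v$ plus an effective source $S'=\na_v\cdot((A-I)\na_v f)+B\cdot\na_v f+S\in L^2_{t,x}H^{-1}_v$, close an energy estimate using the ellipticity of $A$ to control $\|\na_v f\|_{L^2}$ by $\|S\|_{L^2}$, and apply the theorem with $\sigma=1$, $s_0=s$, which covers the full range $s\in(0,1/3]$. The bookkeeping of constants ($\|u\|_{L^\infty}\le\Lambda$ and the Gr\"onwall factor depending only on $\Lambda,R$) is handled correctly.
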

It is also reasonable to expect pointwise regularity if the advection velocity $u$ possesses the same amount of regularity. While relevant, this issue diverges from the primary focus of this article. 

\section{Commutator and energy estimates}\label{CE}
This section is devoted to the proof of Proposition~\ref{reg-est} (see below), which directly implies the results of Theorem~\ref{reg-0} and Theorem~\ref{reg-u}. We first set up commutator and energy estimates that account for inhomogeneous fluxes. We then develop a mechanism for transferring regularity at the microlocal level, which enables us to demonstrate how the aforementioned estimates contribute to time-space regularity properties for solutions to \eqref{KE}. It is noteworthy that the estimates presented in this section are of an $L^2$ nature. 

Let us begin by introducing some abbreviations for simplicity. Throughout this section, we use $\widehat{\cdot}$ to denote the Fourier transform with respect to all variables $(t,x,v)\mapsto(\tau,\xi,\eta)$, and adopt the shorthand notation $\int\cdot:=\int_{\mathbb{R}\times\mathbb{R}^d\times\mathbb{R}^d}\cdot\dif\tau\dif\xi\dif\eta$. 

In order to refine the characterization of \eqref{na}, we have to examine the noncommutation relation at the microlocal level. To this end, we apply the Fourier transform to \eqref{KE} in all variables to see that 
\begin{align}\label{FKE}
\big(i\tau-\xi\cdot\nabla_\eta+i\xi\cdot\tilde{u}*_{\tau,\xi}\big)\widehat{f}
=\left(\lambda|\tau|^\alpha+|\xi|^\alpha\right)|\eta|^\beta\widehat{g},  
\end{align}
where we abbreviated $\tilde{u}:=\mathcal{F}_{t,x}u$. Let us define the operator  
\begin{align*}
X:=i\tau-\xi\cdot\nabla_\eta+i\xi\cdot\tilde{u}*_{\tau,\xi}.  
\end{align*}

\subsection{Commutator method}
Drawing inspiration from \eqref{na}, we introduce a constant $r\in\mathbb{R}$ and consider the commutator identity 
\begin{align}\label{naf}
\begin{aligned}
[\;\!\langle\eta\rangle^{1-r},\;\!X\;\!]
&=[\;\!\langle\eta\rangle^{1-r},\;-\xi\cdot\nabla_\eta\;\!]\\
&=(1-r)\,\xi\cdot\eta\,\langle\eta\rangle^{-1-r}. 
\end{aligned}
\end{align}
Similarly to \eqref{na}, this the commutator identity hinges on the fact that the vector field $u$ is independent of $v$, ensuring that the convolution operation $*_{\tau,\xi}$ commutes with multiplication by functions of $\eta$.

The energy estimates associated with the operator $X$ are stated as follows. 

\begin{lemma}\label{reg-lemma}
For any function $f(t,x,v)\in C^\infty_c(\mathbb{R}^{1+2d})$, any nonnegative function $\chi(\tau,\xi,\eta)\in C_b^\infty(\mathbb{R}^{1+2d})$, and any constants $s,r\ge0$, we have 
\begin{align}\label{reg-commu}	
\begin{aligned}
\int \chi\, |\xi|^2\langle\xi\rangle^{2s-2}\langle\eta\rangle^{-2-2r}\left(1+(1-2r)|\eta|^2\right)|\widehat{f}|^2
\le \int|\nabla_\eta\chi|\langle\xi\rangle^{2s}\langle\eta\rangle^{1-2r}|\widehat{f}|^2&\\
+ 2\left| \int \chi\,\xi\cdot\eta\,\langle\xi\rangle^{2s-2}\langle\eta\rangle^{-2r}X\widehat{f}\, \overline{\widehat{f}}\,\right|
+ 2\Im \int \chi\,\xi\cdot\eta\,\langle\xi\rangle^{2s-2}\langle\eta\rangle^{-2r}\xi\cdot\tilde{u}*_{\tau,\xi}\widehat{f}\, \overline{\widehat{f}}&, 
\end{aligned}
\end{align}
and 
\begin{align}\label{reg-time} 
\begin{aligned}
\int \chi |\tau|^2\langle\tau\rangle^{2s-2}\langle\eta\rangle^{-2r} |\widehat{f}|^2 
= -\Re \int \chi\,\tau\langle\tau\rangle^{2s-2}\langle\eta\rangle^{-2r} \xi\cdot\widehat{vf}\,\overline{\widehat{f}}&\\
+\Im\int\chi\,\tau\langle\tau\rangle^{2s-2}\langle\eta\rangle^{-2r} X\widehat{f}\,\overline{\widehat{f}}
-\Re \int\chi\,\tau\langle\tau\rangle^{2s-2}\langle\eta\rangle^{-2r} \xi\cdot\tilde{u}*_{\tau,\xi}\widehat{f}\,\overline{\widehat{f}}&. 
\end{aligned}
\end{align}
\end{lemma}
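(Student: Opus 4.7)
The plan is to derive both identities in Fourier variables by testing the Fourier-transformed equation against a real-valued multiplier times $\overline{\hf}$, integrating over $(\tau,\xi,\eta)$, and extracting either the real or the imaginary part of the resulting complex number. Since $-\xi\cdot\nabla_\eta$ is skew-adjoint in $\eta$ with $\xi$ frozen, whereas $i\tau$ and $i\xi\cdot\hu*_{\tau,\xi}$ act as purely imaginary multiplication and convolution operators that commute with $\eta$-derivatives of any real test symbol, choosing the real or the imaginary part cleanly isolates the three constituents of $X$ one at a time.

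For \eqref{reg-commu}, I would test against the real symbol
\begin{align*}
M := \chi(\tau,\xi,\eta)\,(\xi\cdot\eta)\,\la\xi\ra^{2s-2}\la\eta\ra^{-2r}
\end{align*}
and compute $\Re\int X\hf\,M\,\overline{\hf}$. The $i\tau$-contribution drops (its integrand is purely imaginary), the transport piece yields $\frac{1}{2}\int|\hf|^2\,\xi\cdot\nabla_\eta M$ after one integration by parts in $\eta$, and the flux piece contributes $-\Im\int\xi\cdot\hu*_{\tau,\xi}\hf\,M\,\overline{\hf}$; rearranging already accounts for the last two terms on the right of \eqref{reg-commu}. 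The algebraic heart of the argument is then the product-rule identity
\begin{align*}
\xi\cdot\nabla_\eta M = \chi\la\xi\ra^{2s-2}\la\eta\ra^{-2-2r}\bigl[\,|\xi|^2\bigl(1+(1-2r)|\eta|^2\bigr) + 2r\bigl(|\xi|^2|\eta|^2-(\xi\cdot\eta)^2\bigr)\bigr] + (\xi\cdot\eta)(\xi\cdot\nabla_\eta\chi)\la\xi\ra^{2s-2}\la\eta\ra^{-2r},
\end{align*}
derived from $\xi\cdot\nabla_\eta(\xi\cdot\eta)=|\xi|^2$, $\xi\cdot\nabla_\eta\la\eta\ra^{-2r}=-2r(\xi\cdot\eta)\la\eta\ra^{-2-2r}$, and the rewriting $|\xi|^2(1+|\eta|^2)-2r(\xi\cdot\eta)^2=|\xi|^2(1+(1-2r)|\eta|^2)+2r(|\xi|^2|\eta|^2-(\xi\cdot\eta)^2)$. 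Because $r\ge 0$ and $(\xi\cdot\eta)^2\le|\xi|^2|\eta|^2$ by Cauchy--Schwarz, the second bracketed summand is nonnegative and may be discarded from an upper bound, exposing precisely the weight on the left of \eqref{reg-commu}. The residual $\chi$-derivative piece is then bounded by $|\nabla_\eta\chi|\la\xi\ra^{2s}\la\eta\ra^{1-2r}$ via the trivial estimates $|\xi|^2\la\xi\ra^{2s-2}\le\la\xi\ra^{2s}$ and $|\eta|\la\eta\ra^{-2r}\le\la\eta\ra^{1-2r}$, yielding the first term on the right of \eqref{reg-commu}.

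For \eqref{reg-time}, I would test against $N:=\chi\tau\la\tau\ra^{2s-2}\la\eta\ra^{-2r}$ and take instead the imaginary part of $\int X\hf\,N\,\overline{\hf}$. Now the $i\tau$-piece returns exactly $\int\tau N|\hf|^2 = \int\chi|\tau|^2\la\tau\ra^{2s-2}\la\eta\ra^{-2r}|\hf|^2$ --- the desired left-hand side --- the flux piece yields $\Re\int\xi\cdot\hu*_{\tau,\xi}\hf\,N\,\overline{\hf}$, and the transport piece is rewritten without any integration by parts via the Fourier identification $\widehat{vf}=i\nabla_\eta\hf$: since $-\xi\cdot\nabla_\eta\hf = i\,\xi\cdot\widehat{vf}$, its contribution to the imaginary part of $\int X\hf\,N\,\overline{\hf}$ is $\Re\int\xi\cdot\widehat{vf}\,N\,\overline{\hf}$. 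Rearranging yields \eqref{reg-time} as an exact identity.

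The only substantive step is the product-rule identity displayed above: one has to spot that differentiating $(\xi\cdot\eta)\la\eta\ra^{-2r}$ along $\xi$ produces, modulo a nonnegative Cauchy--Schwarz remainder, exactly the weight $|\xi|^2\la\eta\ra^{-2-2r}\bigl(1+(1-2r)|\eta|^2\bigr)$ appearing on the left-hand side of \eqref{reg-commu}. Once this combinatorial rewriting is recognized, both estimates reduce to routine separation of real and imaginary parts.
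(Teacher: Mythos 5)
Your proposal is correct and is essentially the paper's argument: the paper derives the same energy identity by integrating the commutator relation $[\la\eta\ra^{1-r},X]\hf$ against $\chi\,\xi\cdot\eta\,\la\xi\ra^{2s-2}\la\eta\ra^{-1-r}\overline{\hf}$ and integrating by parts, which after simplification is exactly your computation of $\tfrac12\int(\xi\cdot\na_\eta M)|\hf|^2=\Re\int X\hf\,M\,\overline{\hf}+\Im\int\xi\cdot\hu*_{\tau,\xi}\hf\,M\,\overline{\hf}$, followed by the same Cauchy--Schwarz discard of the nonnegative remainder $2r(|\xi|^2|\eta|^2-(\xi\cdot\eta)^2)$. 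The time identity \eqref{reg-time} is obtained in the paper from the algebraic relation $|\tau|^2\hf=-\tau\xi\cdot\widehat{vf}-i\tau X\hf-\tau\xi\cdot\hu*_{\tau,\xi}\hf$ tested against $\chi\la\tau\ra^{2s-2}\la\eta\ra^{-2r}\overline{\hf}$, which coincides with your "take the imaginary part" derivation.
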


\begin{proof}
Let us consider the commutator identity \eqref{naf} acting on $\widehat{f}\;\!$. By integrating it against $\chi\;\!\xi\cdot\eta\;\!\langle\xi\rangle^{2s-2}\langle\eta\rangle^{-1-r}\overline{\widehat{f}}$, we obtain 
\begin{align}\label{x-x-x}
\begin{aligned}
(1-r)\int \chi\,|\xi\cdot\eta|^2\langle\xi\rangle^{2s-2}\langle\eta\rangle^{-2-2r}|\widehat{f}|^2&\\
=\int \chi\,\xi\cdot\eta\,\langle\xi\rangle^{2s-2}\langle\eta\rangle^{-1-r}\widehat{f}\, \overline{\big(\langle\eta\rangle^{1-r}X\widehat{f}-X(\langle\eta\rangle^{1-r}\widehat{f})\big)}&. 
\end{aligned}
\end{align}
For the second term on the right-hand side, integration by parts yields 
\begin{align*}
-\int \chi\,\xi\cdot\eta\,\langle\xi\rangle^{2s-2}\langle\eta\rangle^{-1-r}\widehat{f}\, \overline{X(\langle\eta\rangle^{1-r}\widehat{f})}
=\int\chi\,\xi\cdot\eta\,\langle\xi\rangle^{2s-2}\langle\eta\rangle^{-1-r}X\widehat{f}\, \overline{\langle\eta\rangle^{1-r}\widehat{f}}&\\
-\int\langle\xi\rangle^{2s-2}\xi\cdot\nabla_\eta\left(\chi\,\xi\cdot\eta\,\langle\eta\rangle^{-1-r}\right) \widehat{f}\, \overline{\langle\eta\rangle^{1-r}\widehat{f}} &\\
+2\Im\int \chi\,\xi\cdot\eta\,\langle\xi\rangle^{2s-2}\langle\eta\rangle^{-1-r} \xi\cdot\tilde{u}*_{\tau,\xi}\widehat{f}\, \overline{\langle\eta\rangle^{1-r}\widehat{f}}&. 
\end{align*}
Note that the first term on the right-hand side above and the first term on the right-hand side of \eqref{x-x-x} can be combined as twice their real part. To evaluate the second term on the right-hand side above, we compute 
\begin{align*}
\xi\cdot\nabla_\eta\big(\chi\,\xi\cdot\eta\,\langle\eta\rangle^{-1-r}\big)&\\
= (\xi\cdot\nabla_\eta\chi)\,\xi\cdot\eta\,\langle\eta\rangle^{-1-r}
 + \chi\,\langle\eta\rangle^{-3-r} \big(|\xi|^2\langle\eta\rangle^2-(1+r)|\xi\cdot\eta|^2\big)&. 
\end{align*}
Combining the above three identities, we have 
\begin{align*}
\int \chi\,\langle\xi\rangle^{2s-2}\langle\eta\rangle^{-2-2r}\big(|\xi|^2\langle\eta\rangle^2-2r|\xi\cdot\eta|^2\big)|\widehat{f}|^2
= 2\Re \int \chi\,\xi\cdot\eta\,\langle\xi\rangle^{2s-2}\langle\eta\rangle^{-2r}X\widehat{f}\, \overline{\widehat{f}}&\\
- \int (\xi\cdot\nabla_\eta\chi)\,\xi\cdot\eta\,\langle\xi\rangle^{2s-2}\langle\eta\rangle^{-2r}|\widehat{f}|^2
+ 2\Im\int \chi\,\xi\cdot\eta\,\langle\xi\rangle^{2s-2}\langle\eta\rangle^{-2r} \xi\cdot\tilde{u}*_{\tau,\xi}\widehat{f}\, \overline{\widehat{f}}&.  
\end{align*}
This implies \eqref{reg-commu} by noticing that 
\begin{align*}
|\xi|^2\langle\eta\rangle^2-2r|\xi\cdot\eta|^2\ge|\xi|^2+(1-2r)|\xi|^2|\eta|^2. 
\end{align*}  

Besides, we observe that the operator $X$ satisfies  
\begin{align*}
|\tau|^2\widehat{f}= -\tau\xi\cdot\widehat{vf} -i\tau X\widehat{f} -\tau\xi\cdot\tilde{u}*_{\tau,\xi}\widehat{f}\,. 
\end{align*}
Integrating it against $\chi\;\!\langle\tau\rangle^{2s-2}\langle\eta\rangle^{-2r}\overline{\widehat{f}}$  gives \eqref{reg-time}. This completes the proof. 
\end{proof}

\subsection{Inhomogeneous fluxes}
Let us now focus on deriving the following trilinear-type estimates associated with the macroscopic advection velocity $u=u(t,x)$ in \eqref{KE}, which are integral to the energy estimates in Lemma~\ref{reg-lemma}. In comparison to the perturbative setting treated in \cite{JLTs}, achieving the general result without smallness and high-order smoothness necessitates additional velocity regularity for solutions to \eqref{KE}. 
\begin{lemma}\label{reg-tri}
Let the constants $s\in(0,1/2]$, $s_0\in(0,2s]$, $r\in[0,1/2]$, $r_0\in[0,2r]$, and $(a_1,a_2)=(2,\infty)$ or $(\infty,2)$. Assume that $\chi(\tau,\xi,\eta)\in C^\infty(\mathbb{R}^{1+2d})$ valued in $[0,1]$ satisfies $|\nabla_{(\tau,\xi)}\chi|\le C'\langle(\tau,\xi)\rangle^{-1}$ for some $C'\ge1$. If $u\in L_{t,x}^\infty$ and $D_{t,x}^{s_0}u\in L_{t,x}^{a_1}$, then for any function $f(t,x,v)\in C^\infty_c(\mathbb{R}^{1+2d})$, we have 
\begin{align}\label{reg-trix}
\begin{aligned}
\left|\,\Im \int \chi\,\xi\cdot\eta\,\langle\xi\rangle^{2s-2}\langle\eta\rangle^{-2r}\xi\cdot\tilde{u}*_{\tau,\xi}\widehat{f}\, \overline{\widehat{f}}\,\right|&\\
\lesssim C' \|D_{t,x}^{s_0}u\|_{L_{t,x}^{a_1}} \|D_{t,x}^{2s-s_0}f\|_{L_{t,x}^2H_v^{-r_0}} \|D_v^{1-2r+r_0}f\|_{L_{t,x}^{a_2}L_v^{2}}&; 
\end{aligned}
\end{align} 
and for any $r'\in\mathbb{R}$, we have 
\begin{align}\label{reg-trit}
\begin{aligned}
\int\chi|\tau|^2\langle\tau\rangle^{2s-2} \langle\eta\rangle^{-2r'}|\tilde{u}*_{\tau,\xi}\widehat{f}||\widehat{f}|
\lesssim \|u\|_{L_{t,x}^{a_1}} \|D_t^sf\|_{L^2_{t,x}H_v^{-r'}}^2 &\\
+ \|D_t^su\|_{L_{t,x}^{a_1}}\|f\|_{L_{t,x}^{a_2}H_v^{-r'}} \|D_t^sf\|_{L^2_{t,x}H^{-r'}_v}&. 
\end{aligned}
\end{align}
\end{lemma}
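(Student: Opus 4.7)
\textbf{Plan for \eqref{reg-trix}.}
The plan is to exploit two structural features: the reality of the vector field $u$, which gives the conjugation symmetry $\overline{\hu(\zeta)}=\hu(-\zeta)$, and the antisymmetrization performed by $\Im$. I would first expand $\hu*_{\tau,\xi}\hf$ as a double integral in an auxiliary frequency $(\tau',\xi')$, compute $2i\,\Im I = I-\overline{I}$, and swap the dummy labels $(\tau,\xi)\leftrightarrow(\tau',\xi')$ in $\overline{I}$ to recast $\Im I$ as an integral whose kernel features the symbol difference
\begin{align*}
G(\tau,\xi,\eta) - G(\tau',\xi',\eta), \qquad G(\tau,\xi,\eta):=\chi(\tau,\xi,\eta)\,\xi\otimes\xi\,\la\xi\ra^{2s-2},
\end{align*}
contracted with $\eta\la\eta\ra^{-2r}$ and paired with $\hu(\tau-\tau',\xi-\xi')\hf(\tau',\xi',\eta)\overline{\hf(\tau,\xi,\eta)}$.

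The main technical estimate is a fractional H\"older bound for this symbol difference. The hypothesis $|\na_{(\tau,\xi)}\chi|\le C'\la(\tau,\xi)\ra^{-1}$ combined with $s\in(0,1/2]$ yields $|\na_{(\tau,\xi)} G|\lesssim C'\la(\tau,\xi)\ra^{2s-1}$, and interpolating with the trivial bound $|G|\lesssim\la(\tau,\xi)\ra^{2s}$ gives, for $s_0\in(0,2s]\subset(0,1]$,
\begin{align*}
|G(\tau,\xi,\eta)-G(\tau',\xi',\eta)|\lesssim C'\,|(\tau-\tau',\xi-\xi')|^{s_0}\bigl(\la(\tau,\xi)\ra^{2s-s_0}+\la(\tau',\xi')\ra^{2s-s_0}\bigr).
\end{align*}
The two summands are symmetric in $(\tau,\xi)\leftrightarrow(\tau',\xi')$ and will be handled identically. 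I would then split the velocity weight as $|\eta|\la\eta\ra^{-2r}\le\la\eta\ra^{1-2r+r_0}\la\eta\ra^{-r_0}$, which is legitimate since $r_0\in[0,2r]$ and $1-2r+r_0\ge 0$. The three resulting frequency factors identify, on the physical side, with $D_{t,x}^{s_0}u$, $\la D_{t,x}\ra^{2s-s_0}\la D_v\ra^{-r_0}f$, and $\la D_v\ra^{1-2r+r_0}f$, and a Plancherel in $v$ followed by a triple H\"older in $(t,x)$ with exponents $(a_1,2,a_2)$ satisfying $\tfrac{1}{a_1}+\tfrac{1}{2}+\tfrac{1}{a_2}=1$ closes the estimate.

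The main obstacle I anticipate is this last passage from Fourier to physical space: the H\"older bound on $|G-G'|$ introduces absolute values on $\hu$ and $\hf$, so a direct Plancherel is unavailable. I would bypass this by reinterpreting the original integral as the quadratic form $\langle\Psi(uf),f\rangle$ where $\Psi$ is the Fourier multiplier with symbol $\chi(\xi\cdot\eta)\,\xi\la\xi\ra^{2s-2}\la\eta\ra^{-2r}$; the identity $2i\,\Im\langle\Psi(uf),f\rangle=\langle[\Psi,u]f,f\rangle$ turns the H\"older estimate into a symbol bound for the commutator $[\Psi,u]$, whose action in $L^{a_1}_{t,x}\times L^2_{t,x}\times L^{a_2}_{t,x}$ can then be controlled by a Bony paraproduct decomposition in $(t,x)$.

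\textbf{Plan for \eqref{reg-trit}.}
This estimate is direct. Using $|\tau|^2\la\tau\ra^{2s-2}\le\la\tau\ra^{2s}$ and distributing the Fourier weights symmetrically as $\la\tau\ra^s\cdot\la\tau\ra^s$ and $\la\eta\ra^{-r'}\cdot\la\eta\ra^{-r'}$, Cauchy-Schwarz in Fourier variables yields
\begin{align*}
\text{LHS}\le\|\la D_t\ra^s(uf)\|_{L^2_{t,x}H_v^{-r'}}\cdot\|\la D_t\ra^s f\|_{L^2_{t,x}H_v^{-r'}}.
\end{align*}
A fractional Leibniz (Kato-Ponce) rule in the time variable splits the first factor as
\begin{align*}
\|\la D_t\ra^s(uf)\|_{L^2_{t,x}H_v^{-r'}}\lesssim\|u\|_{L^{a_1}_{t,x}}\|D_t^s f\|_{L^{a_2}_{t,x}H_v^{-r'}}+\|D_t^s u\|_{L^{a_1}_{t,x}}\|f\|_{L^{a_2}_{t,x}H_v^{-r'}},
\end{align*}
with the admissible pair $(a_1,a_2)\in\{(2,\infty),(\infty,2)\}$ chosen so H\"older in $(t,x)$ returns exponent~$2$. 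Multiplying back by the second factor produces \eqref{reg-trit}. The only subtle ingredient is the Leibniz inequality at the $L^\infty$ endpoint, which is covered by a standard Bony paraproduct decomposition.
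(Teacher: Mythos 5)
Your plan follows the paper's proof essentially step for step: the antisymmetrization exploiting the reality of $u$ (your identity $2i\,\Im\langle\Psi(uf),f\rangle=\langle[\Psi,u]f,f\rangle$ is precisely the Fourier-side exchange identity the paper imports from \cite{JLTs}), the fractional H\"older bound on the symbol difference splitting $2s=s_0+(2s-s_0)$, the weight splitting $|\eta|^{1-2r}=|\eta|^{-r_0}\cdot|\eta|^{1-2r+r_0}$, and the final $(a_1,2,a_2)$-H\"older are all exactly the paper's steps, with your ``main obstacle'' resolved in the paper by factoring out the explicit bounded symbols $\Psi_1,\Psi_2$ rather than by a paraproduct decomposition --- the same computation in different clothing. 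The treatment of \eqref{reg-trit} by Cauchy--Schwarz plus the fractional Leibniz rule also matches the paper; the only adjustment is that in the first Leibniz term the low-order factor $u$ should be placed in $L^\infty_{t,x}$ (which is assumed) so that $D_t^s f$ lands in $L^2_{t,x}$ for both choices of $(a_1,a_2)$, as the stated right-hand side requires.
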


\begin{proof}
Let us first abbreviate 
\begin{align*}
\Phi(\varrho,\eta):=\chi(\tau,\xi,\eta)\,\xi\cdot\eta\,\langle\xi\rangle^{2s-2}\langle\eta\rangle^{-2r} \xi, 
\quad\varrho:=(\tau,\xi)\in\mathbb{R}^{1+d}. 
\end{align*}
In the light of \cite[Lemma~1]{JLTs}, we are able to write  
\begin{align}\label{exchange}
\begin{aligned}
2\Im \int \chi\,\xi\cdot\eta\,\langle\xi\rangle^{2s-2}\langle\eta\rangle^{-2r}\xi\cdot\tilde{u}*_{\tau,\xi}\widehat{f}\, \overline{\widehat{f}}
=2\Im\int_{\varrho,\eta} \Phi\cdot\tilde{u}*_\varrho\widehat{f}\, \overline{\widehat{f}}&\\
=\Im\int_{\varrho,\varrho',\eta} (\Phi(\varrho+\varrho',\eta)-\Phi(\varrho',\eta))\, \cdot\tilde{u}(\varrho)\widehat{f}(\varrho',\eta) \overline{\widehat{f}}(\varrho+\varrho',\eta)&. 
\end{aligned}
\end{align}
Indeed, by using a change of variables and taking the conjugate, we have 
\begin{align*}
\Im\int_{\varrho,\eta}\Phi\cdot\tilde{u}*_\varrho\widehat{f}\,\overline{\widehat{f}}
&=\Im\int_{\varrho,\varrho',\eta}\Phi(\varrho,\eta)\cdot\tilde{u}(\varrho-\varrho')\widehat{f}(\varrho',\eta)\,\overline{\widehat{f}}(\varrho,\eta)\\
&= \Im\int_{\varrho,\varrho',\eta} \Phi(\varrho',\eta)\cdot\tilde{u}(\varrho'-\varrho)\widehat{f}(\varrho,\eta)\,\overline{\widehat{f}}(\varrho',\eta)\\
&= -\Im\int_{\varrho,\varrho',\eta} \Phi(\varrho',\eta)\cdot\overline{\tilde{u}}(\varrho'-\varrho)\overline{\widehat{f}}(\varrho,\eta)\, \widehat{f}(\varrho',\eta)\\
&= -\Im\int_{\varrho,\varrho',\eta} \Phi(\varrho',\eta)\cdot\tilde{u}(\varrho-\varrho') \widehat{f}(\varrho',\eta)\,\overline{\widehat{f}}(\varrho,\eta), 
\end{align*}
where the variables $\rho$ and $\rho'$ were exchanged in the second line and conjugation was taken in the third. Changing variables $(\rho,\rho')\mapsto(\rho+\rho',\rho')$ then gives that 
\begin{align*}
\Im\int_{\varrho,\eta}\Phi\cdot\tilde{u}*_\varrho\widehat{f}\,\overline{\widehat{f}}
= -\Im\int_{\varrho,\varrho',\eta} \Phi(\varrho',\eta)\cdot\tilde{u}(\varrho) \widehat{f}(\varrho',\eta)\,\overline{\widehat{f}}(\varrho+\varrho',\eta)&. 
\end{align*}
Combining this with the following identity that comes from a change of variables
\begin{align*}
\Im\int_{\varrho,\eta} \Phi\cdot\tilde{u}*_\varrho\widehat{f}\,\overline{\widehat{f}}
&=\Im\int_{\varrho,\varrho',\eta}\Phi(\varrho,\eta)\cdot\tilde{u}(\varrho-\varrho')\widehat{f}(\varrho',\eta)\,\overline{\widehat{f}}(\varrho,\eta)&\\
&=\Im\int_{\varrho,\varrho',\eta} \Phi(\varrho+\varrho',\eta)\cdot\tilde{u}(\varrho)\widehat{f}(\varrho',\eta)\,\overline{\widehat{f}}(\varrho+\varrho',\eta), 
\end{align*}
we obtain \eqref{exchange} as claimed. 

Let us now consider the function $\psi:\mathbb{R}^{1+d}\times\mathbb{R}\rightarrow\mathbb{R}^d$ defined by  
\begin{align*}
\psi(\varrho,\eta):=\Phi(\varrho,\eta)\,|\varrho|^{-2s}|\eta|^{-1+2r}. 
\end{align*}
It turns out from the definition that $\psi(\varrho,\eta)$ is valued in $[-1,1]$. By the definitions of $\psi$ and $\Phi$, together with the assumption on $|\nabla_\varrho\chi|$, we have 
\begin{align}\label{grad-phi}
|\nabla_\varrho\psi(\varrho,\eta)|\lesssim C'\langle\varrho\rangle^{-1}.
\end{align}
Abbreviate $s_1:=2s-s_0\ge0$. For $\varrho,\varrho'\in\mathbb{R}^{1+d}$, $\eta\in\mathbb{R}^d$, we write 
\begin{align}\label{psi0}
\begin{aligned}
|\eta|^{-1+2r} \big(\Phi(\varrho+\varrho',\eta)-\Phi(\varrho',\eta)\big)&\\
= \Psi_1(\varrho,\varrho',\eta)\, |\varrho'|^{s_1}|\varrho|^{s_0}+ \Psi_2(\varrho,\varrho',\eta)\, |\varrho'|^{s_1} |\varrho|^{s_0} + \Psi_2(\varrho,\varrho',\eta)\, |\varrho+\varrho'|^{s_1} |\varrho|^{s_0}&,  
\end{aligned}
\end{align}
where $\Psi_1,\Psi_2$ are defined by 
\begin{align*}
\Psi_1(\varrho,\varrho',\eta)&:=(\psi(\varrho+\varrho',\eta)-\psi(\varrho',\eta))|\varrho'|^{2s}|\varrho|^{-s_0},\\
\Psi_2(\varrho,\varrho',\eta)&:=\psi(\varrho+\varrho',\eta) \big(|\varrho+\varrho'|^{2s}-|\varrho'|^{2s}\big) \big(|\varrho+\varrho'|^{s_1}+|\varrho'|^{s_1}\big)^{\!-1} |\varrho|^{-s_0}. 
\end{align*}
For $\Psi_1$, we see from \eqref{grad-phi} that if $2|\varrho|\le|\varrho'|$, then 
\begin{align*}
|\Psi_1(\varrho,\varrho',\eta)| &=\big|\psi(\varrho+\varrho',\eta)-\psi(\varrho',\eta)\big||\varrho'|^{2s}|\varrho|^{-s_0}\\
&\lesssim C'\langle\varrho'\rangle^{-1} |\varrho'|^{s_0}|\varrho|^{1-s_0} \le C'. 
\end{align*}
It thus implies that  
\begin{align}\label{psi1}
\sup\nolimits_{\varrho,\varrho',\eta}|\Psi_1|
\le \sup\nolimits_{\{2|\varrho|\ge|\varrho'|\}}|\Psi_1| + \sup\nolimits_{\{2|\varrho|\le|\varrho'|\}}|\Psi_1|
\lesssim C'&. 
\end{align}
For $\Psi_2$, by applying twice the elementary inequality  $\big||\varrho_1+\varrho_2|^{a}-|\varrho_2|^{a}\big|\le|\varrho_1|^{a}$ for any $a\in[0,1]$ and $\varrho_1,\varrho_2\in\mathbb{R}^{1+d}$, we have  
\begin{align*}
\big||\varrho+\varrho'|^{2s}-|\varrho'|^{2s}\big| \big(|\varrho+\varrho'|^{s_1}+|\varrho'|^{s_1}\big)^{\!-1}|\varrho|^{-s_0}
\le |\varrho|^{2s-s_1-s_0}=1
\end{align*}
which implies that  
\begin{align}\label{psi2}
\sup\nolimits_{\varrho,\varrho',\eta} |\Psi_2| \le 1. 
\end{align}
In view of \eqref{exchange} and \eqref{psi0}, we know that 
\begin{align*}
2\Im \int \chi\,\xi\cdot\eta\,\langle\xi\rangle^{2s-2}\langle\eta\rangle^{-2r}\xi\cdot\tilde{u}*_{\tau,\xi}\widehat{f}\, \overline{\widehat{f}}\,
=2\Im\int_{\varrho,\eta} \Phi\cdot\tilde{u}*_\varrho\widehat{f}\, \overline{\widehat{f}}\,&\\
=\Im\int_{t,x,\eta,\varrho,\varrho'} (\Psi_1+\Psi_2) \cdot \mathcal{F}_{t,x}D_{t,x}^{s_0}u(\varrho) \widehat{D_{t,x}^{s_1}f}(\varrho',\eta) \mathcal{F}_vf(t,x,\eta) e^{i(t,x)\cdot(\varrho+\varrho')} |\eta|^{1-2r}&\\ 
+\Im\int_{t,x,\eta,\varrho,\varrho'} \Psi_2 \cdot \mathcal{F}_{t,x}D_{t,x}^{s_0}u(\varrho)\widehat{f}(\varrho',\eta) \mathcal{F}_vD_{t,x}^{s_1}f(t,x,\eta) e^{i(t,x)\cdot(\varrho+\varrho')} |\eta|^{1-2r}&. 
\end{align*}
With the aid of \eqref{psi1}, \eqref{psi2} and H\"older's inequality, we then arrive at \eqref{reg-trix}. 

Finally, \eqref{reg-trit} is a direct consequences of H\"older's inequality and the fractional Leibniz rule (see for instance \cite[Section~2.1]{Taylor}). 
\end{proof}

\subsection{Transfer of regularity}\label{transfer}
We are now positioned to establish the following regularity result by combining the estimates from preceding subsections through a transfer mechanism of the hypoelliptic nature. 
\begin{proposition}\label{reg-est} 
Let $(a_1,a_2)=(2,\infty)$ or $(\infty,2)$, $p,q\in(1,\infty)$, $s\in(0,1/2]$, $s_0\in(0,2s]$, $r\in[0,1/2]$, $r_0\in[0,2r]$, and $k,l\ge0$ such that 
\begin{align}\label{ranges}
0\le l\le 1-2r+\beta,\quad 0\le s\le \frac{(1-\alpha)(k+r)}{1+\beta+2k-l}.
\end{align}
Assume that the functions $g\in L^{p}_{t,x}L_v^{q}$ and $f\in L_{t,x}^2H_v^k$ such that $D_v^lf\in L_{t,x}^{p_\star}L_v^{q_\star}$, $f\in L_{t,x}^{a_2}H_v^{1-2r+r_0}$ and $\sup\nolimits_{{\rm supp\;\!} f}|v|<\infty$. If $f,g$ satisfy \eqref{KE} with $u\in L_{t,x}^\infty$ and $D_{t,x}^{s_0}u\in L_{t,x}^{a_1}$, then we have $f\in H_{t,x}^sH_v^{-r}$ with 
\begin{align*}
\begin{aligned}
\left(1-2r\right)\|(D_x^s,c_*^{s}D_t^s)f\|_{L_{t,x}^2H_v^{-r}}^2
 + \|(D_x^s,c_*^{s}D_t^s)f\|_{L_{t,x}^2H_v^{-1-r}}^2\\
\lesssim \left(1+\lambda c_*^{-\alpha}\right) \|D_v^lf\|_{L_{t,x}^{p_\star}L_v^{q_\star}} \|g\|_{L_{t,x}^{p}L_v^{q}} 
+\|f\|_{L_{t,x}^2H_v^k}^2  \\
 +\|D_{t,x}^{s_0}u\|_{L_{t,x}^{a_1}} \|f\|_{L_{t,x}^{a_2}H_v^{1-2r+r_0}} \big(\|D_t^sf\|_{L^2_{t,x}H^{-r}_v} +\|D_{t,x}^{2s-s_0}f\|_{L_{t,x}^2H_v^{-r_0}}\big)&, 
\end{aligned}
\end{align*}
where $c_*:=(\sup\nolimits_{{\rm supp\;\!} f}\langle v\rangle+\|u\|_{L_{t,x}^\infty})^{-1}$. 
\end{proposition}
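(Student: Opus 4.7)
The plan is to pass to Fourier variables, apply the commutator identities of Lemma~\ref{reg-lemma} against a cutoff $\chi(\tau,\xi,\eta)$ that localizes $\eta$ to $\langle\eta\rangle\lesssim M$ for a threshold $M$ adapted to the derivative under control. For the spatial bound $\|D_x^s f\|_{L^2_{t,x}H^{-r}_v}$ I would take $M=M(\xi):=\langle\xi\rangle^{s/(k+r)}$, and for the $c_*^s$-weighted time bound I would set $M=M(\tau):=(c_*\langle\tau\rangle)^{s/(k+r)}$. On the complementary region $\langle\eta\rangle\gtrsim M$ the trivial estimate $\langle\xi\rangle^{2s}\langle\eta\rangle^{-2r}\lesssim\langle\eta\rangle^{2k}$ absorbs the high-$\eta$ content into $\|f\|_{L^2_{t,x}H^k_v}^2$, and the boundary piece $\int|\nabla_\eta\chi|\langle\xi\rangle^{2s}\langle\eta\rangle^{1-2r}|\hat f|^2$ in \eqref{reg-commu}, being supported on $\langle\eta\rangle\sim M$, is absorbed identically.

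Next, I would use the Fourier-transformed equation \eqref{FKE} to replace $X\hat f$ in the right-hand sides of \eqref{reg-commu}--\eqref{reg-time} by $(\lambda|\tau|^\alpha+|\xi|^\alpha)|\eta|^\beta\hat g$. After pairing via Parseval and applying H\"older in the mixed-norm spaces $L^p_{t,x}L^q_v\times L^{p_*}_{t,x}L^{q_*}_v$, the source contribution is dominated by $\|g\|_{L^p_{t,x}L^q_v}\|D_v^l f\|_{L^{p_*}_{t,x}L^{q_*}_v}$ multiplied by the operator norm of the Fourier multiplier
\[
m(\tau,\xi,\eta):=\chi\,\xi\cdot\eta\,\langle\xi\rangle^{2s-2}\langle\eta\rangle^{-2r}\big(\lambda|\tau|^\alpha+|\xi|^\alpha\big)|\eta|^{\beta-l}.
\]
On $\supp\chi$ one has $|\eta|\le 2M$, and the critical exponent relation $s\le(1-\alpha)(k+r)/(1+\beta+2k-l)$ from \eqref{ranges} is precisely what forces the non-$\lambda$ part of $m$ to be $\lesssim 1$; the $\lambda$ piece is estimated via $\langle\tau\rangle^\alpha\lesssim c_*^{-\alpha}\langle\xi\rangle^\alpha$ (valid on the essential support of $\hat f$ thanks to the $v$-support bound $|v|\lesssim c_*^{-1}$), producing the prefactor $1+\lambda c_*^{-\alpha}$. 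A Mihlin-H\"ormander-type bound would then yield the $\|D_v^l f\|\|g\|$ contribution of the proposition.

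The flux-commutator terms in \eqref{reg-commu} and \eqref{reg-time} involving $\hat u *_{\tau,\xi}\hat f$ are controlled directly by Lemma~\ref{reg-tri}, since the chosen cutoff satisfies $|\nabla_{(\tau,\xi)}\chi|\lesssim\langle(\tau,\xi)\rangle^{-1}$; this produces the last line of the claimed inequality. For the $c_*^s D_t^s$-part, identity \eqref{reg-time} carries the extra term $\int\chi\,\tau\langle\tau\rangle^{2s-2}\langle\eta\rangle^{-2r}\xi\cdot\widehat{vf}\,\overline{\hat f}$, which I would handle by Plancherel together with Young's inequality, using $|v|\le c_*^{-1}$ on $\supp f$ to absorb this contribution into a small multiple of $c_*^{2s}\|D_t^s f\|_{L^2_{t,x}H^{-r}_v}^2$ plus a multiple of $\|D_x^s f\|_{L^2_{t,x}H^{-r}_v}^2$ (the latter already under control from the previous step). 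The main obstacle lies in the second paragraph: verifying that $m$ obeys Mihlin-H\"ormander estimates uniformly in $M$ with the conversion $|\tau|^\alpha\mapsto c_*^{-\alpha}\langle\xi\rangle^\alpha$ properly justified; this balancing is exactly what pins down the critical exponent for $s$ and the auxiliary constraint $l\le 1-2r+\beta$ ensuring that the exponent $\beta-l$ of $|\eta|$ in $m$ stays nonnegative.
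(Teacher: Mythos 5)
Your overall architecture coincides with the paper's: the $\eta$--versus--$(\tau,\xi)$ frequency splitting with a threshold of the form $\langle(c_*\tau,\xi)\rangle^{\delta}$, the trivial bound $\langle\xi\rangle^{2s}\langle\eta\rangle^{-2r}\le\langle\eta\rangle^{2k}$ on the complementary high-$\eta$ region, substitution of \eqref{FKE} into the commutator identity followed by a Mikhlin-type multiplier bound giving the $\|D_v^lf\|\,\|g\|$ term, Lemma~\ref{reg-tri} for the flux contributions, and \eqref{reg-time} for the time regularity. The genuine gap is that you never split the $(\tau,\xi)$-plane into the space-frequency-dominant region $c_*|\tau|\lesssim 1+|\xi|$ and its complement (the paper's cutoffs $\chi_2,\chi_3$), and both places where you compensate for this omission are incorrect. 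First, the inequality $\langle\tau\rangle^{\alpha}\lesssim c_*^{-\alpha}\langle\xi\rangle^{\alpha}$ does \emph{not} hold ``on the essential support of $\widehat{f}$ thanks to the $v$-support bound'': compact support of $f$ in $v$ gives no relation whatsoever between $\tau$ and $\xi$ on the Fourier side. The correct mechanism is to restrict the commutator estimate \eqref{reg-commu} to $\{c_*|\tau|\le 1+|\xi|\}$ before bounding the multiplier containing $\lambda|\tau|^{\alpha}$, and to treat the complementary region entirely through the time identity.

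Second, in \eqref{reg-time} the term $\int\chi\,\tau\langle\tau\rangle^{2s-2}\langle\eta\rangle^{-2r}\xi\cdot\widehat{vf}\,\overline{\widehat{f}}$ cannot be absorbed by a global Young inequality into $\varepsilon\,c_*^{2s}\|D_t^sf\|_{L^2_{t,x}H^{-r}_v}^2+C\|D_x^sf\|_{L^2_{t,x}H^{-r}_v}^2$. After Cauchy--Schwarz in $\eta$ it is controlled by $\sup_{\supp f}\langle v\rangle\int_{\tau,\xi}|\tau|\langle\tau\rangle^{2s-2}|\xi|\,\|\mathcal{F}_{t,x}f\|_{H^{-r}_v}^2$, and on the region $|\xi|\gg\langle\tau\rangle$ the weight $|\tau|\langle\tau\rangle^{2s-2}|\xi|$ is of size $|\xi|$, which is not $O(\langle\tau\rangle^{2s}+\langle\xi\rangle^{2s})$ for $s<1/2$. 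The absorption works only on $\{|\xi|\le c_*|\tau|\}$, where the weight is at most $c_*|\tau|^{2}\langle\tau\rangle^{2s-2}$ and the smallness of $c_*\sup_{\supp f}\langle v\rangle$ lets the term be absorbed into the left-hand side after multiplying through by $c_*^{2s}$; on the complementary region the weighted time norm is dominated by the space norm and needs no separate argument. Making this dichotomy explicit --- with cutoffs satisfying symbol bounds of the type \eqref{Mi-cond} so that the Mikhlin theorem applies on each piece --- is the one missing ingredient; with it, your argument reduces to the paper's proof.
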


\begin{proof}
In order to address the issue of regularity transfer, we consider $\delta,c_1\in(0,1]$, which will be determined later, and introduce three auxiliary functions defined in the Fourier space, $\chi_1=\chi_1(\tau,\xi,\eta)\in C^\infty(\mathbb{R}\times\mathbb{R}^d\times\mathbb{R}^d)$ and $\chi_2=\chi_2(\tau,\xi)$, $\chi_3=\chi_3(\tau,\xi)\in C^\infty(\mathbb{R}\times\mathbb{R}^d)$, all valued in $[0,1]$ and satisfying 
\begin{align*}
\chi_1=1 {\ \ \rm in\ } \{|(c_1\tau,\xi)|^\delta\ge|\eta|\}, \quad
\chi_1=0 {\ \ \rm in\ } \{2\langle(c_1\tau,\xi)\rangle^\delta\le|\eta|\},\\
\chi_2=1 {\ \ \rm in\ } \{c_1|\tau|\le1+|\xi|\}, \quad 
\chi_2=0 {\quad\rm in\ } \{c_1|\tau|\ge2+2|\xi|\} ,\\
\chi_3=1 {\ \ \rm in\ } \{c_1|\tau|\ge1+|\xi|\}, \quad 
\chi_3=0 {\quad\rm in\ } \{c_1|\tau|\le|\xi|\},\\
|\nabla_\eta\chi_1|\le 4\langle\eta\rangle^{-1},\quad
|\nabla_{(\tau,\xi)}(\chi_1\chi_2)|\lesssim \langle(\tau,\xi)\rangle^{-1} {\ \ \rm in\ }\mathbb{R}^{1+2d}. 
\end{align*}
We also suppose that for any $j\in\mathbb{N}^{1+2d}$ with $|j|\le1+d$, 
\begin{align}\label{Mi-cond}
|\partial^j(\chi_1\chi_2)|+|\partial^j(\chi_1\chi_3)|\lesssim1 {\ \ \rm in\ }\mathbb{R}^{1+2d}. 
\end{align}
The proof will then proceed in four steps. 

\medskip\noindent\textit{Step 1. Regularity outside the effective region of $\chi_1$. }\\
We consider the constant $\delta\in (0,1]$ such that 
\begin{align}\label{exponent1}
s/\delta-r\le k. 
\end{align}
It then follows that 
\begin{align}\label{trans-v}
\int_{\{|(c_1\tau,\xi)|^{\delta}\le|\eta|\}} \langle(c_1\tau,\xi)\rangle^{2s}\langle\eta\rangle^{-2r}|\widehat{f}|^2
\le \int \langle\eta\rangle^{2s/\delta-2r}|\widehat{f}|^2
= \int \langle\eta\rangle^{2k}|\widehat{f}|^2.  
\end{align}

\medskip\noindent\textit{Step 2. Regularity on the support of $\chi_1\chi_2$. }\\
By \eqref{reg-commu} of Lemma~\ref{reg-lemma} with $\chi:=\chi_1\chi_2$, we obtain 
\begin{align}\label{hand1}
\begin{aligned}
\int_{\{|\eta|\le|(c_1\tau,\xi)|^{\delta},\,c_1|\tau|\le1+|\xi|\}}  |\xi|^2\langle\xi\rangle^{2s-2}\langle\eta\rangle^{-2-2r}\left(1+(1-2r)|\eta|^2\right)|\widehat{f}|^2&\\
\le 4\int_{\{|\eta|\ge|(c_1\tau,\xi)|^\delta\}} \langle\eta\rangle^{-2r}\langle\xi\rangle^{2s}|\hat{f}|^2 
+2\left|\int \chi_1\chi_2\,\xi\cdot\eta\,\langle\xi\rangle^{2s-2}\langle\eta\rangle^{-2r}X\widehat{f}\, \overline{\widehat{f}}\,\right|&\\
+ 2\Im \int \chi_1\chi_2\,\xi\cdot\eta\,\langle\xi\rangle^{2s-2}\langle\eta\rangle^{-2r}\xi\cdot\tilde{u}*_{\tau,\xi}\widehat{f}\, \overline{\widehat{f}}\,&. 
\end{aligned}
\end{align}
In view of \eqref{FKE}, for the constant $l\ge0$, we write 
\begin{align}\label{Mikh}
\begin{aligned}
\int \chi_1\chi_2\,\xi\cdot\eta\,\langle\xi\rangle^{2s-2}\langle\eta\rangle^{-2r}X\widehat{f}\, \overline{\widehat{f}}&\\
= \int \chi_1\chi_2\,\xi\cdot\eta\,\langle\xi\rangle^{2s-2}\langle\eta\rangle^{-2r} \left(\lambda|\tau|^\alpha+|\xi|^\alpha\right)|\eta|^{\beta-l}\widehat{g}\, \overline{|\eta|^l\widehat{f}}&. 
\end{aligned}
\end{align}
Let us choose $s,\delta\ge0$ such that 
\begin{align}\label{exponent2}
2s-1+\alpha+\delta(1-2r+\beta-l)=0. 
\end{align}
Now that $c_1|\tau|\le2(1+|\xi|)$ and $|\eta|\le2\langle(c_1\tau,\xi)\rangle^\delta\lesssim \langle\xi\rangle^\delta$ hold on the support of $\chi_1\chi_2$, together with the smoothness assumption~\eqref{Mi-cond} on $\chi_1\chi_2$ and the fact that $1-2r+\beta-l\ge0$, we are able to apply the Mikhlin type multiplier theorem (one may refer for instance to \cite[Section~6.2]{Gra}; see also \cite{NI} for the case of mixed Lebesgue spaces) to \eqref{Mikh} with the Fourier multiplier
\begin{align*}
\chi_1\chi_2\,\xi\cdot\eta\,\langle\xi\rangle^{2s-2}\langle\eta\rangle^{-2r} \left(\lambda|\tau|^\alpha+|\xi|^\alpha\right)|\eta|^{\beta-l}.
\end{align*}
This application then allows us to see that 
\begin{align}\label{hand2}
\left|\int \chi_1\chi_2\,\xi\cdot\eta\,\langle\xi\rangle^{2s-2}\langle\eta\rangle^{-2r}X\widehat{f}\,\overline{\widehat{f}}\,\right| 
\lesssim \big(1+\lambda c_1^{-\alpha}\big) \|D_v^lf\|_{L_{t,x}^{p_\star}L_v^{q_\star}} \|g\|_{L_{t,x}^{p}L_v^{q}}. 
\end{align}
We thus conclude from \eqref{trans-v}, \eqref{hand1}, \eqref{hand2}, as well as \eqref{reg-trix} of Lemma~\ref{reg-tri}, that 
\begin{align}\label{trans-x}
\begin{aligned}
\int_{\{|\eta|\le|(c_1\tau,\xi)|^{\delta},\,c_1|\tau|\le1+|\xi|\}}  \langle(c_1\tau,\xi)\rangle^{2s}\langle\eta\rangle^{-2-2r} \left(1+(1-2r)|\eta|^2\right) |\widehat{f}|^2 &\\
\lesssim \|f\|_{L_{t,x}^2H_v^k}^2  
+ \big(1+\lambda c_1^{-\alpha}\big)\|D_v^lf\|_{L_{t,x}^{p_\star}L_v^{q_\star}} \|g\|_{L_{t,x}^{p}L_v^{q}}&\\
+\|D_{t,x}^{s_0}u\|_{L_{t,x}^{a_1}} \|D_{t,x}^{2s-s_0}f\|_{L_{t,x}^2H_v^{-r_0}} \|D_v^{1-2r+r_0}f\|_{L_{t,x}^{a_2}L_v^{2}}&.  
\end{aligned}
\end{align}
We point out that \eqref{exponent1} and \eqref{exponent2}, along with the assumption that $1-2r+\beta-l\ge0$, determine the range of $s$ given in \eqref{ranges}. 

\medskip\noindent\textit{Step 3. Regularity on the support of $\chi_1\chi_3$. }\\
Let us take the constant $r'=r$ or $1+r$ for brevity. 
Applying \eqref{reg-time} of Lemma~\ref{reg-lemma} with $\chi:=\chi_1\chi_3$ yields that 
\begin{align*}
 \int \chi_1\chi_3\,|\tau|^2\langle\tau\rangle^{2s-2}\langle\eta\rangle^{-2r'}|\widehat{f}|^2 
\le c_1\int_{\tau,\xi} |\tau|^2\langle\tau\rangle^{2s-2}\left|\int_\eta\langle\eta\rangle^{-2r'}\widehat{vf}\,\overline{\widehat{f}}\,\right|&\\
+ \left|\int\chi_1\chi_3\,\tau\langle\tau\rangle^{2s-2}\langle\eta\rangle^{-2r'} X\widehat{f}\,\overline{\widehat{f}}\,\right| + \int|\tau|^2\langle\tau\rangle^{2s-2}\langle\eta\rangle^{-2r'} |\tilde{u}*_{\tau,\xi}\widehat{f}||\widehat{f}|&. 
\end{align*}
We first notice that 
\begin{align*}
\left|\int_\eta\langle\eta\rangle^{-2r'}\widehat{vf}\,\overline{\widehat{f}}\,\right| =\|v\mathcal{F}_{t,x}f\|_{H_v^{-r'}}\|\mathcal{F}_{t,x}f\|_{H_v^{-r'}}
\lesssim\sup\nolimits_{{\rm supp\;\!} f}\langle v\rangle \int_\eta\langle\eta\rangle^{-2r'}|\widehat{f}|^2. 
\end{align*}
Next, similar calculations to those in \eqref{Mikh}, \eqref{exponent2}, \eqref{hand2} show that by the Mikhlin type multiplier theorem, 
\begin{align*}
\chi_1\chi_3\,\tau\langle\tau\rangle^{2s-2}\langle\eta\rangle^{-2r'}\left(\lambda|\tau|^\alpha+|\xi|^\alpha\right)|\eta|^{\beta-l}
\end{align*}
is a bounded Fourier multiplier on $L_{t,x}^pL_v^q(\mathbb{R}^{1+2d})$ so that from \eqref{FKE}, we have 
\begin{align*}
\left|\int\chi_1\chi_3\,\tau\langle\tau\rangle^{2s-2}\langle\eta\rangle^{-2r'} X\widehat{f}\,\overline{\widehat{f}}\,\right|
\lesssim \|D_v^lf\|_{L_{t,x}^{p_\star}L_v^{q_\star}} \|g\|_{L_{t,x}^{p}L_v^{q}}.  
\end{align*}
Taking $c_1:=c_2\min\big\{(\sup\nolimits_{{\rm supp\;\!}f}\langle v\rangle)^{-1},\|\langle u\rangle\|_{L_{t,x}^\infty}^{-1}\big\}$ for some $c_2\in(0,1]$ to be chosen, and gathering the above three estimates with \eqref{reg-trit} of Lemma~\ref{reg-tri}, we obtain 
\begin{align*}
\int \chi_1\chi_3\,|\tau|^{2s}\langle\eta\rangle^{-2r'} |\widehat{f}|^2
\lesssim c_2 \int |\tau|^{2s}\langle\eta\rangle^{-2r'}|\widehat{f}|^2 &\\
+ \|D_v^lf\|_{L_{t,x}^{p_\star}L_v^{q_\star}} \|g\|_{L_{t,x}^{p}L_v^{q}} 
+ \|D_t^su\|_{L^{a_1}_{t,x}}\|f\|_{L^{a_2}_{t,x}H_v^{-r'}} \|D_t^sf\|_{L^2_{t,x}H^{-r'}_v}&.
\end{align*}
By taking our choice of $\chi_1\chi_3$ into account, we deduce that 
\begin{align}\label{trans-t}
\begin{aligned}
\int_{\{|\eta|\le|(c_1\tau,\xi)|^\delta,\, c_1|\tau|\ge1+|\xi|\}} \langle(c_1\tau,\xi)\rangle^{2s} \langle\eta\rangle^{-2-2r} \left(1+(1-2r)|\eta|^2\right) |\widehat{f}|^2 &\\
\lesssim  c_2c_1^{2s} \ \int |\tau|^{2s}\langle\eta\rangle^{-2-2r} \left(1+(1-2r)|\eta|^2\right) |\widehat{f}|^2 &\\
+ c_1^{2s}\|D_v^lf\|_{L_{t,x}^{p_\star}L_v^{q_\star}} \|g\|_{L_{t,x}^{p}L_v^{q}} + c_1^{2s} \|D_t^su\|_{L^{a_1}_{t,x}}\|f\|_{L^{a_2}_{t,x}H_v^{-r}} \|D_t^sf\|_{L^2_{t,x}H^{-r}_v}&.  
\end{aligned}
\end{align}

\medskip\noindent\textit{Step 4. Conclusion.}\\
Combining \eqref{trans-v}, \eqref{trans-x}, \eqref{trans-t} yields that  
\begin{align*}
(1-2r)\int \langle(c_1\tau,\xi)\rangle^{2s}\langle\eta\rangle^{-2r}|\widehat{f}|^2 + \int \langle(c_1\tau,\xi)\rangle^{2s}\langle\eta\rangle^{-2-2r}|\widehat{f}|^2 &\\
\lesssim  c_2c_1^{2s}\int |\tau|^{2s} \langle\eta\rangle^{-2-2r} \left(1+(1-2r)|\eta|^2\right) |\widehat{f}|^2 + \|f\|_{L^2_{t,x}H_v^k}^2 &\\
+ \big(1+\lambda c_1^{-\alpha}\big) \|D_v^lf\|_{L_{t,x}^{p_\star}L_v^{q_\star}} \|g\|_{L_{t,x}^{p}L_v^{q}} + \|D_t^su\|_{L^{a_1}_{t,x}} \|D_t^sf\|_{L^2_{t,x}H^{-r}_v}\|f\|_{L^{a_2}_{t,x}H_v^{-r}}&\\
+\|D_{t,x}^{s_0}u\|_{L_{t,x}^{a_1}} \|D_{t,x}^{2s-s_0}f\|_{L_{t,x}^2H_v^{-r_0}} \|D_v^{1-2r+r_0}f\|_{L_{t,x}^{a_2}L_v^{2}} &.
\end{align*}
By picking $c_2\in(0,1]$ to be sufficiently small, we arrive at the desired result. 
\end{proof}

Theorem~\ref{reg-0} and Theorem~\ref{reg-u} are direct consequences of Proposition~\ref{reg-est}.  
\begin{proof}[Proof of Theorem~\ref{reg-0} and Theorem~\ref{reg-u}]
Parts \ref{reg-1} and \ref{reg-2} of Theorem~\ref{reg-0} are given by Proposition~\ref{reg-est} ($u=0$) with $r=1/2$ and $r=0$, respectively. Theorem~\ref{reg-u} follows from Proposition~\ref{reg-est} ($\lambda=1$) and the Cauchy-Schwarz inequality. Indeed, 
\begin{align*}
\begin{aligned}		
\|D_{t,x}^{s_0}u\|_{L_{t,x}^{a_1}} \|f\|_{L_{t,x}^{a_2}H_v^{1-2r+r_0}} \big(\|D_t^sf\|_{L^2_{t,x}H^{-r}_v} +\|D_{t,x}^{2s-s_0}f\|_{L_{t,x}^2H_v^{-r_0}}\big)\\
\le \varepsilon^{-1}\|D_{t,x}^{s_0}u\|_{L_{t,x}^{a_1}}^2 \|f\|_{L_{t,x}^{a_2}H_v^{1-2r+r_0}}^2 +\varepsilon\|D_t^sf\|_{L^2_{t,x}H^{-r}_v}^2 +\varepsilon\|D_{t,x}^{2s-s_0}f\|_{L_{t,x}^2H_v^{-r_0}}^2&, 
\end{aligned}
\end{align*}
where the constant $\varepsilon:=\varepsilon_0c_*^{2s}(1-2r)$ for some $\varepsilon_0>0$ to be sufficiently small. Since $s_0\in[s,2s]$ and $r_0s+rs_0\ge2rs$, we have the following inequality on the multiplier,  
\begin{align*}
|(\tau,\xi)|^{2s-s_0}\langle\eta\rangle^{-r_0}
&\le \max\big\{1,\,|(\tau,\xi)|^{s}\langle\eta\rangle^{-r_0s/(2s-s_0)}\big\}\\
&\le \max\big\{1,\,|(\tau,\xi)|^{s}\langle\eta\rangle^{-r}\big\}, 
\end{align*}
which implies that 
\begin{align*}
\|D_{t,x}^{2s-s_0}f\|_{L_{t,x}^2H_v^{-r_0}}
\le \|f\|_{L_{t,x,v}^2} + \|D_{t,x}^{s}f\|_{L_{t,x}^2H_v^{-r}}. 
\end{align*}
Combining these two estimates and applying Proposition~\ref{reg-est}, we conclude the proof of Theorem~\ref{reg-u}. 
\end{proof}

In the spirit of \cite{BD}, one can also consider the averaging theory without performing the Fourier transform in time to accommodate time-discretized settings. To elaborate, we consider \eqref{KE} with the constant $\lambda=0$ and vector field $u=u(x)$ depending only on $x$. Instead of \eqref{FKE}, applying the Fourier transform $\mathcal{F}_{x,v}$ to \eqref{KE} yields that
\begin{align*}
X_t\mathcal{F}_{x,v}f=|\xi|^\alpha|\eta|^\beta\mathcal{F}_{x,v}g,\\
X_t:=\partial_t-\xi\cdot\nabla_\eta+i\xi\cdot(\mathcal{F}_xu)*_{\xi}. 
\end{align*}
Using the same arguments as in the proofs presented in this section, particularly setting $c_1=0$ in the proof of Proposition~\ref{reg-est}, leads to the following regularity estimate, which is independent of the localization of solutions. 

\begin{corollary}
Let $(a_1,a_2)=(2,\infty)$ or $(\infty,2)$, $p,q\in(1,\infty)$, $s\in(0,1/2]$, $s_0\in(0,2s]$, $r\in[0,1/2]$, $r_0\in[0,2r]$, and $k,l\ge0$ such that $r_0s+rs_0\ge2rs$ and \eqref{ranges} holds. Assume that the vector field $u=u(x)$ satisfying $D_{x}^{s_0}u\in L^{a_1}(\mathbb{R}^d)$, and the functions $g\in L^{p}_{t,x}L_v^{q}(\mathbb{R}^{1+2d})$ and $f\in L_{t,x}^2H_v^k\cap L_{t,x}^{a_2}H_v^{1-2r+r_0}(\mathbb{R}^{1+2d})$ such that $D_v^lf\in L_{t,x}^{p_\star}L_v^{q_\star}(\mathbb{R}^{1+2d})$. If $f,g$ satisfy \eqref{KE} with $\lambda=0$, then we have 
\begin{align*}
\begin{aligned}
(1-2r)\|D_x^sf\|_{L_{t,x}^2H_v^{-r}}^2 + \|D_x^sf\|_{L_{t,x}^2H_v^{-1-r}}^2&\\
\lesssim \|D_v^lf\|_{L_{t,x}^{p_\star}L_v^{q_\star}} \|g\|_{L_{t,x}^{p}L_v^{q}}
+\|f\|_{L_{t,x}^2H_v^k}^2 
 +(1-2r)^{-1}\|D_{x}^{s_0}u\|_{L_{x}^{a_1}}^2\|f\|_{L_{t,x}^{a_2}H_v^{1-2r+r_0}}^2&. 
\end{aligned}
\end{align*}
\end{corollary}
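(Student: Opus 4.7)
The plan is to replay the proof of Proposition~\ref{reg-est} with the partial Fourier transform $\F_{x,v}$ in place of the full $\F_{t,x,v}$, formally setting $c_1=0$ so that no cutoff in the $\tau$ variable is ever required. Applying $\F_{x,v}$ to \eqref{KE} with $\lambda=0$ and $u=u(x)$ yields $X_t\F_{x,v}f=|\xi|^\alpha|\eta|^\beta\F_{x,v}g$ as stated in the excerpt, and the central commutator identity
\[
[\;\!\la\eta\ra^{1-r},\;\!X_t\;\!]=(1-r)\,\xi\cdot\eta\,\la\eta\ra^{-1-r}
\]
holds exactly as in \eqref{naf}, since $\p_t$ commutes with any function of $\eta$.

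First I would establish the $\p_t$-free analog of \eqref{reg-commu}: multiply the commutator identity by $\chi(\xi,\eta)\,\xi\cdot\eta\,\la\xi\ra^{2s-2}\la\eta\ra^{-1-r}\overline{\F_{x,v}f}$ (with $\chi$ now depending only on $(\xi,\eta)$) and integrate over $(t,\xi,\eta)$. The contribution of the $\p_t$-part of $X_t$ becomes a total time derivative and vanishes after integration in $t$ using the $L_t^2$-integrability of $f$; the remaining terms reproduce the structure of Lemma~\ref{reg-lemma} with the weight $\la\tau\ra$ simply replaced by $\la\xi\ra$ throughout. The algebra of the trilinear estimate of Lemma~\ref{reg-tri} is purely in the variable $\xi$, so the bound \eqref{reg-trix} transfers verbatim and yields
\[
\Bigl|\,\Im\int\chi\,\xi\cdot\eta\,\la\xi\ra^{2s-2}\la\eta\ra^{-2r}\xi\cdot(\F_xu)*_\xi\F_{x,v}f\,\overline{\F_{x,v}f}\,\Bigr|\lesssim \|D_x^{s_0}u\|_{L_x^{a_1}}\|D_x^{2s-s_0}f\|_{L_{t,x}^2H_v^{-r_0}}\|D_v^{1-2r+r_0}f\|_{L_{t,x}^{a_2}L_v^2}.
\]

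The transfer-of-regularity scheme of \S~\ref{transfer} then simplifies considerably: I would introduce only the single cutoff $\chi_1=\chi_1(\xi,\eta)$ supported where $|\xi|^\delta\ge|\eta|$ and equal to one on $\{2\la\xi\ra^\delta\le|\eta|\}^c$, dropping $\chi_2$ and $\chi_3$ along with the entire third step of the proof of Proposition~\ref{reg-est}. On the complement $\{|\xi|^\delta\le|\eta|\}$, the bound follows from \eqref{exponent1} exactly as in \eqref{trans-v}; on $\supp\chi_1$, the Mikhlin multiplier theorem applied to $\chi_1\,\xi\cdot\eta\,\la\xi\ra^{2s-2}\la\eta\ra^{-2r}|\xi|^\alpha|\eta|^{\beta-l}$ (using \eqref{ranges} together with $1-2r+\beta-l\ge0$) controls the source term, while the trilinear estimate above controls the flux term. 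The interpolation
\[
\|D_x^{2s-s_0}f\|_{L_{t,x}^2H_v^{-r_0}}\le\|f\|_{L_{t,x,v}^2}+\|D_x^sf\|_{L_{t,x}^2H_v^{-r}},
\]
valid because $s_0\in[s,2s]$ and $r_0s+rs_0\ge2rs$ exactly as in the closing argument of Theorem~\ref{reg-u}, then lets me absorb the middle factor into the left-hand side via a weighted Cauchy--Schwarz with parameter $\varepsilon\sim(1-2r)$.

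The only step I expect to require a moment of care is verifying that the $\p_t$-contribution in the commutator identity genuinely vanishes; this reduces to the $L_t^2$-integrability of $f$ and the fact that $\chi$ is $\tau$-independent, so it is a mild bookkeeping point rather than a real obstacle. All other steps are direct translations of the arguments already carried out in this section with $c_1$ set to zero.
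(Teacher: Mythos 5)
Your proposal is correct and is essentially the paper's own argument, which is stated in one line as ``use the same arguments with $c_1=0$'': apply $\F_{x,v}$ only, keep the commutator identity and the trilinear and Mikhlin estimates, drop $\chi_2,\chi_3$ and the time-regularity step, and absorb the flux term by weighted Cauchy--Schwarz with $\varepsilon\sim(1-2r)$. The only caveat is that your closing interpolation $\|D_x^{2s-s_0}f\|_{L_{t,x}^2H_v^{-r_0}}\le\|f\|_{L_{t,x,v}^2}+\|D_x^sf\|_{L_{t,x}^2H_v^{-r}}$ uses $s_0\ge s$, whereas the corollary only assumes $s_0\in(0,2s]$; this discrepancy is already present between Theorem~\ref{reg-u} and the corollary in the paper itself and is not a defect of your argument.
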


\section{Endpoint hypoelliptic estimates}\label{endpoint}
This section is dedicated to the proof of Theorem~\ref{reg-p}. To begin, we need to revisit some basic knowledge related to the fractional Kolmogorov equation. By taking advantage of the smoothness and integrability properties of its fundamental solution, we can subsequently formulate some $L^p$ hypoelliptic regularity. 

\subsection{Fractional Kolmogorov equations}
Let us consider the Kolmogorov equation of order $\sigma>0$ with the source term $S=S(t,x,v)$, 
\begin{align}\label{FKol}
\partial_tf+v\cdot\nabla_xf+D_v^\sigma f=S. 
\end{align}
We recall that $D_v^\sigma=(-\Delta_v)^{\sigma/2}$. The fundamental solution of \eqref{FKol} has the form 
\begin{align}\label{G-formula}
G(t,x,v)= \begin{cases}
\ 0 &{\rm\ for\ \;}t\le0,\\
\ \frac{c_{d,\sigma}}{t^{d+2d/\sigma}}\mathcal{G}\left(\frac{x}{t^{1+1/\sigma}},\frac{v}{t^{1/\sigma}}\right) &{\rm\ for\ \;} t>0, 
\end{cases}
\end{align}
so that the solution $f=f(t,x,v)$ to \eqref{FKol} has the representation formula
\begin{align}\label{G-repres}
\begin{aligned}
\int_{-\infty}^t\int_{\mathbb{R}^{2d}} S(t',x',v')\,G(t-t',x-x'-(t-t')v',v-v')\dif t'\dif x'\dif v'. 
\end{aligned}
\end{align}
Here the function $\mathcal{G}$ is nonnegative and can be expressed by means of the Fourier transform $\mathcal{F}_{x,v}$ with respect to $(x,v)\mapsto(\xi,\eta)$,  
\begin{align}\label{GG}
\mathcal{F}_{x,v}\mathcal{G}(\xi,\eta)=e^{-\int_0^1|\vartheta\xi-\eta|^\sigma\dif\vartheta}, 
\end{align}
and the constant $c_{d,\sigma}>0$ depends only on $d,\sigma$ and satisfies $\int_{\mathbb{R}^{2d}}G(t,x,v)\dif x\dif v=1$ for any $t>0$. 

The representation \eqref{G-repres} of solutions in terms of the fundamental solution \eqref{G-formula} can be readily obtained through the method of characteristics following the Fourier transform $\mathcal{F}_{x,v}$ applied to \eqref{FKol}. One may also refer for instance to \cite{Ho,HMP,IS,NZ} for further information on this aspect. 

The precise decay and smoothness properties of the fundamental solution \eqref{G-formula} are related to singular (nonsmooth version) oscillating integrals. The following lemma provides the integrability property $G$ that are sufficient for our purposes. It is a direct consequence of its scaling relation with $\mathcal{G}$ and the corresponding property of $\mathcal{G}$ (see Appendix~\ref{append}). 

\begin{lemma}\label{GG0}
Let $\alpha_0,\beta_0\ge0$ and $p_0,q_0\in[0,\infty]$. There is some constant $C_0>0$ depending only on $d,\sigma,\alpha_0,\beta_0,p_0,q_0$ such that for any $t>0$, 
\begin{align*}
\|D_x^{\alpha_0}D_v^{\beta_0}G(t,x,v)\|_{L_x^{p_0}L_v^{q_0}(\mathbb{R}^{2d})}
\le C_0t^{-\kappa-\alpha_0(1+1/\sigma)-\beta_0/\sigma}, 
\end{align*}
where the constant $\kappa\ge0$ is given by 
\begin{align*}
\kappa:=(d-d/p_0)(1+1/\sigma)+(d-d/q_0)/\sigma. 
\end{align*}
\end{lemma}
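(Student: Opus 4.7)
The plan is to reduce everything to a scaling computation, modulo the fact, established in Appendix~\ref{append}, that all the derivatives $D_x^{\alpha_0}D_v^{\beta_0}\mathcal{G}$ belong to $L_x^{p_0}L_v^{q_0}(\R^{2d})$. Once that input is in hand, the inequality is forced by dimensional analysis: the prefactor $t^{-d(1+1/\sigma)-d/\sigma}$ in \eqref{G-formula} is precisely the Jacobian that normalizes $\int G(t,\cdot,\cdot)=1$, so every additional derivative or change of integrability exponent must be accounted for by a power of $t$.

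First I would set the anisotropic dilation $\delta_t(x,v):=(t^{1+1/\sigma}x,\,t^{1/\sigma}v)$ and rewrite \eqref{G-formula} as $G(t,x,v)=c_{d,\sigma}\,t^{-d(1+1/\sigma)-d/\sigma}\mathcal{G}(\delta_t^{-1}(x,v))$ for $t>0$. Applying $D_x^{\alpha_0}D_v^{\beta_0}$ and using the chain rule produces the extra factor $t^{-\alpha_0(1+1/\sigma)-\beta_0/\sigma}$, so that
\begin{equation*}
D_x^{\alpha_0}D_v^{\beta_0}G(t,x,v)
=c_{d,\sigma}\,t^{-d(1+1/\sigma)-d/\sigma-\alpha_0(1+1/\sigma)-\beta_0/\sigma}\bigl(D_x^{\alpha_0}D_v^{\beta_0}\mathcal{G}\bigr)(\delta_t^{-1}(x,v)).
\end{equation*}
Changing variables $v\mapsto t^{1/\sigma}v$ inside the $L_v^{q_0}$ norm contributes a factor $t^{d/(\sigma q_0)}$, and then $x\mapsto t^{1+1/\sigma}x$ in the $L_x^{p_0}$ norm contributes $t^{(1+1/\sigma)d/p_0}$. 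Collecting the exponents gives
\begin{equation*}
-d(1+1/\sigma)\bigl(1-1/p_0\bigr)-\tfrac{d}{\sigma}\bigl(1-1/q_0\bigr)-\alpha_0(1+1/\sigma)-\beta_0/\sigma=-\kappa-\alpha_0(1+1/\sigma)-\beta_0/\sigma,
\end{equation*}
which is exactly the claimed exponent.

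The remaining ingredient is the finiteness of the constant $C_0$, which reduces to the bound $\|D_x^{\alpha_0}D_v^{\beta_0}\mathcal{G}\|_{L_x^{p_0}L_v^{q_0}}<\infty$ for $\mathcal{G}$. This is precisely the statement deferred to Appendix~\ref{append}: since $\F_{x,v}\mathcal{G}(\xi,\eta)=\exp\bigl(-\int_0^1|\vartheta\xi-\eta|^\sigma\dif\vartheta\bigr)$ by \eqref{GG}, the Fourier symbol decays faster than any polynomial off the characteristic set $\{\vartheta\xi=\eta:\vartheta\in[0,1]\}$, which would yield the required Schwartz-type integrability after taking care of the endpoint exponents $p_0,q_0\in\{1,\infty\}$. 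The main obstacle in the whole argument therefore sits in that appendix rather than in the present lemma; the lemma itself reduces to bookkeeping of scaling exponents once the model profile $\mathcal{G}$ is controlled.
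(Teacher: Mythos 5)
Your proof is correct and is essentially the paper's own argument: the paper likewise rescales $(x,v)\mapsto(t^{-1-1/\sigma}x,\,t^{-1/\sigma}v)$ in \eqref{G-formula} to pull out exactly the factor $t^{-\kappa-\alpha_0(1+1/\sigma)-\beta_0/\sigma}$ times $\|D_x^{\alpha_0}D_v^{\beta_0}\G\|_{L_x^{p_0}L_v^{q_0}}$, and then invokes part~\ref{GG3} of Lemma~\ref{GGG} for the finiteness of that norm. Your identification of the appendix bound on $\G$ as the genuine content, with the lemma itself being bookkeeping of scaling exponents, matches the paper exactly.
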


\begin{proof}
By considering the formula \eqref{G-formula} for $G$, along with a change of variables that $(x,v)\mapsto(t^{-1-1/\sigma}x,\,t^{-1/\sigma}v)$, we have 
\begin{align*}
\|D_x^{\alpha_0}D_v^{\beta_0}G(t,x,v)\|_{L_x^{p_0}L_v^{q_0}} =c_{d,\sigma} t^{-\kappa-\alpha_0(1+1/\sigma)-\beta_0/\sigma}\|D_x^{\alpha_0}D_v^{\beta_0}\mathcal{G}(x,v)\|_{L_x^{p_0}L_v^{q_0}}. 
\end{align*}
The desired result then follows from part~\ref{GG3} of Lemma~\ref{GGG}. 
\end{proof}

\subsection{Hypoelliptic regularization}
Of concern in this subsection is \eqref{KE} with $\lambda=0$ and $u=0$. Supposing that $f,g,D_v^kf\in L_{t,x}^{p}L_v^{q}(\mathbb{R}^{1+2d})$ for some $p,q\in[1,\infty]$ and $k>0$, we rewrite the equation in the form of \eqref{FKol} with an order $\sigma\in(0,k]$ to be determined, 
\begin{align}\label{FK}
\partial_tf+v\cdot\nabla_xf+D_v^\sigma f = D_x^\alpha D_v^\beta g + D_v^{\sigma-k} D_v^kf . 
\end{align}
We point out that the operator $\partial_t+v\cdot\nabla_x+D_v^\sigma$ is invariant in the sense that it is transformed into $\epsilon^{-\sigma}(\partial_t+v\cdot\nabla_x+D_v^\sigma)$ under the scaling $(t,x,v)\mapsto(\epsilon^\sigma t,\epsilon^{1+\sigma}x,\epsilon v)$ for any fixed $\epsilon>0$. In the light of this scaling relation, we will have to pick the constant $\sigma\ge k$ such that $(1+\sigma)\alpha+\beta = \sigma-k$ so that the two terms on the right-hand side of \eqref{FK} are able to enjoy the same order of regularity. 

Let us now turn to the proof of Theorem~\ref{reg-p}. 
\begin{proof}
In view of \eqref{G-repres} applied to \eqref{FK}, we write 
\begin{align}\label{F12}
D_x^sf=F_1+F_2, 
\end{align} 
where the functions $F_1=F_1(t,x,v)$ and $F_2=F_2(t,x,v)$ are defined by
\begin{align*}
F_1:= \int_{0}^{t+T}\!\!\int_{\mathbb{R}^{2d}} g(t-t',x',v') D_x^s D_{x'}^\alpha D_{v'}^\beta [G(t',x-x'-t'v',v-v')] \dif t'\dif x'\dif v',\\
F_2:= \int_{0}^{t+T}\!\!\int_{\mathbb{R}^{2d}} D_{v'}^kf(t-t',x',v') D_x^s D_{v'}^{\sigma-k}[G(t',x-x'-t'v',v-v')]\dif t'\dif x'\dif v'.
\end{align*}
By Lemma~\ref{GG0}, we see that for any $t>0$, 
\begin{align*}
\|D_x^sD_x^\alpha D_{v'}^\beta[G(t,x-tv',v-v')]\|_{L_x^{p_0}L_v^{q_0}} \lesssim t^{-\kappa-(s+\alpha)(1+1/\sigma)-\beta/\sigma},\\
\|D_x^sD_{v'}^{\sigma-k}[G(t,x-tv',v-v')]\|_{L_x^{p_0}L_v^{q_0}} \lesssim t^{-\kappa-s(1+1/\sigma)-(\sigma-k)/\sigma}. 
\end{align*}
Here the exponents $p_0,q_0\in[1,\infty]$ are defined by
\begin{align*}
1/p'+1=1/p+1/p_0,\quad 1/q'+1=1/q+1/q_0, 
\end{align*}
and the constant $\kappa$ is given in Lemma~\ref{GG0} so that 
\begin{align*}
\kappa=(d/p-d/p')(1+1/\sigma)+(d/q-d/q')/\sigma . 
\end{align*}
It then follows from Minkowski's inequality and Young's inequality that 
\begin{align*}
\|F_1(t,\cdot,\cdot)\|_{L_x^{p'}L_v^{q'}} 
\lesssim \int_0^{t+T} \|g(t-t',\cdot,\cdot)\|_{L_x^{p}L_v^{q}}\, t'^{-\kappa-(s+\alpha)(1+1/\sigma)-\beta/\sigma} \dif t',\\
\|F_2(t,\cdot,\cdot)\|_{L_x^{p'}L_v^{q'}}  
\lesssim \int_0^{t+T} \|D_v^kf(t-t',\cdot,\cdot)\|_{L_x^{p}L_v^{q}}\, t'^{-\kappa-s(1+1/\sigma)-(\sigma-k)/\sigma} \dif t'. 
\end{align*}
Provided that 
\begin{align}\label{reg-expp}
\begin{aligned}
\kappa + (s+\alpha)(1+1/\sigma)+\beta/\sigma<1/p_0,\\
\kappa + s(1+1/\sigma)+(\sigma-k)/\sigma<1/p_0, 
\end{aligned}
\end{align}
then applying Young's inequality again, we have  
\begin{align}\label{F12-est}
\begin{aligned}
\|F_1\|_{L_{t,x}^{p'}L_v^{q'}} \lesssim T^{1/p_0-\kappa-(s+\alpha)(1+1/\sigma)-\beta/\sigma} \|g\|_{L_{t,x}^{p}L_v^{q}},\\
\|F_2\|_{L_{t,x}^{p'}L_v^{q'}} \lesssim T^{1/p_0-\kappa-s(1+1/\sigma)-(\sigma-k)/\sigma}  \|D_v^kf\|_{L_{t,x}^{p}L_v^{q}}. 
\end{aligned}
\end{align}
Gathering these two estimates with \eqref{F12} gives the bound on $\|D_x^sf\|_{L_{t,x}^{p'}L_v^{q'}}$. We now intend to optimize the exponents. To this end, we pick $\sigma$ such that $(1+\sigma)\alpha+\beta = \sigma-k$, meaning that 
\begin{align*}
\sigma=\frac{\alpha+\beta+k}{1-\alpha}, 
\end{align*}
so that \eqref{reg-expp} is equivalent to 
\begin{align*}
(1+\sigma)s + \frac{\sigma+d(1+\sigma)}{p}-\frac{\sigma+d(1+\sigma)}{p'} + \frac{d}{q}-\frac{d}{q'} < k.
\end{align*}
The proof is thus complete. 
\end{proof}

\begin{remark}
If $1<p,q<\infty$ and $(p',q')=(p,q)$, then it has been proven in \cite[Theorem~1.3]{Bouchut} that the solution possesses the same estimate as stated in Theorem~\ref{reg-p}  with the equality $s=(1-\alpha)k/(1+\beta+k)$. 
	
When $p=q=1$, the assumption of the $L_{t,x}^pL_v^q$-norm in Theorem~\ref{reg-p} can be replaced by the total variation of Borel measures. 
\end{remark}

In order to better adapt the estimates to Cauchy problems, we shall consider the solution $f=f(t,x,v)$ to \eqref{FKol} associated with the initial data $f_0=f_0(x,v)$, the source term $S=S(t,x,v)$, that is, 
\begin{align*}
\begin{cases}
\ \partial_tf+v\cdot\nabla_xf+D_v^\sigma f=S &{\rm in\ \;} \mathbb{R}_+\times\mathbb{R}^d\times\mathbb{R}^d, \\
\;\,f|_{t=0}=f_0 &{\rm in\ \;} \mathbb{R}^d\times\mathbb{R}^d. 
\end{cases}
\end{align*}
With its fundamental solution $G$ given by \eqref{G-formula}, the solution $f$ is represented by 
\begin{align}\label{Formula-in}
\begin{aligned}
f(t,x,v)=\int_{\mathbb{R}^{2d}} f_0(x',v') G(t,x-x'-tv',v-v')\dif x'\dif v'\\
+\int_0^t\int_{\mathbb{R}^{2d}} S(t',x',v')\,G(t-t',x-x'-(t-t')v',v-v')\dif t'\dif x'\dif v'&.  
\end{aligned}
\end{align}
Expanding upon the preceding proof of Theorem~\ref{reg-p}, we have the following hypoellipticity result. 

\begin{corollary}\label{reg-cauchy} 
Let $p,q\in[1,\infty]$, $T>0$, and let $f\in L_{t,x}^{p}L_v^{q}([0,T]\times\mathbb{R}^{2d})$ be a solution to \eqref{KE}, with $\lambda=0$, $u=0$, $g\in L^{p}_{t,x}L_v^{q}([0,T]\times\mathbb{R}^{2d})$, satisfying the initial condition $f|_{t=0}=f_0\in L_x^pL_v^{q}(\mathbb{R}^{2d})$. If $D_v^kf\in L_{t,x}^{p}L_v^{q}([0,T]\times\mathbb{R}^{2d})$ for some $k>0$, then for any $p'\ge p$, $q'\ge q$ and $s\ge0$ such that there are some $\omega,\omega'>0$ satisfying 
\begin{align*}
\omega + (1+\sigma)s + \frac{d(1+\sigma)}{p}-\frac{d(1+\sigma)}{p'} + \frac{d}{q}-\frac{d}{q'} = \frac{\sigma}{p'},\\
\omega' + (1+\sigma)s + \frac{\sigma+d(1+\sigma)}{p}-\frac{\sigma+d(1+\sigma)}{p'} + \frac{d}{q}-\frac{d}{q'} = k,
\end{align*}
with $\sigma:=(\alpha+\beta+k)/(1-\alpha)$, we have $D_x^sf\in L_{t,x}^{p'}L_v^{q'}([0,T]\times\mathbb{R}^{2d})$ with  
\begin{align*}
\|D_x^sf\|_{L_{t,x}^{p'}L_v^{q'}([0,T]\times\mathbb{R}^{2d})} \lesssim T^{\;\!\sigma\omega}\|f_0\|_{L_x^pL_v^q(\mathbb{R}^{2d})}\\
+T^{\;\!\sigma\omega'}\|D_v^kf\|_{L_{t,x}^{p}L_v^{q}([0,T]\times\mathbb{R}^{2d})} + T^{\;\!\sigma\omega'}\|g\|_{L_{t,x}^{p}L_v^{q}([0,T]\times\mathbb{R}^{2d})}&.  
\end{align*} 
\end{corollary}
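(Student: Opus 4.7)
The plan is to mirror the strategy used in the proof of Theorem~\ref{reg-p}, but with the added contribution coming from the initial datum. First, I would rewrite the equation \eqref{KE} (with $\lambda=0$, $u=0$) in fractional Kolmogorov form
\begin{align*}
\p_tf+v\cdot\na_xf+D_v^\sigma f = D_x^\alpha D_v^\beta g + D_v^{\sigma-k}D_v^k f,
\end{align*}
choosing $\sigma=(\alpha+\beta+k)/(1-\alpha)$ so that the two source pieces share the same intrinsic scaling $(t,x,v)\mapsto(\epsilon^\sigma t,\epsilon^{1+\sigma}x,\epsilon v)$ built into the operator $\p_t+v\cdot\na_x+D_v^\sigma$. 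Using the representation formula \eqref{Formula-in} and commuting $D_x^s$ under the integral, I would then split
\begin{align*}
D_x^s f=F_0+F_1+F_2,
\end{align*}
where $F_0$ is the contribution of the initial datum $f_0$, and $F_1$, $F_2$ are the Duhamel contributions of $g$ and $D_v^k f$, respectively, exactly as in the proof of Theorem~\ref{reg-p}.

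For $F_1$ and $F_2$ I would run verbatim the argument in the proof of Theorem~\ref{reg-p}. Applying Lemma~\ref{GG0} with the parameters $(\alpha_0,\beta_0)=(s+\alpha,\beta)$ for $F_1$ and $(s,\sigma-k)$ for $F_2$, combined with Minkowski's inequality, produces a pointwise-in-time estimate of the form
\begin{align*}
\|F_j(t,\cdot,\cdot)\|_{L_x^{p'}L_v^{q'}}\lesssim\int_0^t\|h_j(t-t',\cdot,\cdot)\|_{L_x^{p}L_v^{q}}\,t'^{-\beta_j}\dif t',
\end{align*}
where $h_1=g$, $h_2=D_v^k f$, and the exponents $\beta_1$, $\beta_2$ are identical thanks to the choice of $\sigma$. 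Young's convolution inequality in time over $[0,T]$ with $1/r=1+1/p'-1/p=1/p_0$ then gives, provided $\beta_j r<1$,
\begin{align*}
\|F_j\|_{L_{t,x}^{p'}L_v^{q'}([0,T]\times\R^{2d})}\lesssim T^{\,1/p_0-\beta_j}\|h_j\|_{L_{t,x}^{p}L_v^{q}([0,T]\times\R^{2d})}.
\end{align*}
A direct computation (multiplying the exponent by $\sigma$) identifies $\sigma(1/p_0-\beta_j)$ with $\omega'$ in the statement, which recovers the desired $T^{\omega'}$-type factor.

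The new step is $F_0(t,x,v)=\int_{\R^{2d}} f_0(x',v')\,D_x^sG(t,x-x'-tv',v-v')\dif x'\dif v'$. Viewing this at fixed $t$ as a convolution in $(x,v)$ of $f_0$ with the shifted kernel $(x,v)\mapsto D_x^s G(t,x-tv,v)$ (the shift $x\mapsto x-tv$ is a measure-preserving map at fixed $v$, so it does not alter the $L_x^{p_0}L_v^{q_0}$-norm), Young's inequality together with the kernel bound from Lemma~\ref{GG0} yields
\begin{align*}
\|F_0(t,\cdot,\cdot)\|_{L_x^{p'}L_v^{q'}}\lesssim t^{-\kappa_0-s(1+1/\sigma)}\|f_0\|_{L_x^pL_v^q},
\end{align*}
where $\kappa_0=(d/p-d/p')(1+1/\sigma)+(d/q-d/q')/\sigma$. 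Taking the $L_t^{p'}([0,T])$-norm then produces $T^{1/p'-\kappa_0-s(1+1/\sigma)}$, and multiplying the exponent by $\sigma$ identifies it with $\omega$ from the statement.

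The integrability conditions at $t'=0$ that make the time integrals finite are precisely the strict positivity requirements $\omega,\omega'>0$ imposed in the hypotheses. The main technical point — and the only place that is not a routine replay of the proof of Theorem~\ref{reg-p} — is verifying that the shifted convolution structure of $F_0$ does not spoil the kernel estimate and that the Young-in-time step for $F_1$, $F_2$ over the bounded interval $[0,T]$ gives the claimed power of $T$; everything else is bookkeeping of exponents under the scaling $\sigma=(\alpha+\beta+k)/(1-\alpha)$.
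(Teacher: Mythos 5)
Your proposal matches the paper's proof essentially verbatim: the same decomposition $D_x^sf=F_0+F_1+F_2$ based on \eqref{Formula-in}, the same treatment of $F_1,F_2$ carried over from Theorem~\ref{reg-p}, and the same kernel bound $\|D_x^s[G(t,\cdot-tv',\cdot-v')]\|_{L_x^{p_0}L_v^{q_0}}\lesssim t^{-\kappa-s(1+1/\sigma)}$ from Lemma~\ref{GG0} followed by Young's inequality for $F_0$, with the integrability conditions at $t'=0$ corresponding exactly to $\omega,\omega'>0$. The exponent bookkeeping is correct as well, so nothing further is needed.
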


\begin{proof}
Let the constants $\sigma,\kappa,p_0,q_0$ be given in the proof of Theorem~\ref{reg-p} above. Following the argument presented, but instead of using \eqref{F12}, we decompose 
\begin{align*}
D_x^sf=F_0+F_1+F_2. 
\end{align*} 
In view of \eqref{F12} and \eqref{Formula-in}, the additional term $F_0=F_0(t,x,v)$ is defined by
\begin{align*}
F_0:= \int_{\mathbb{R}^{2d}} f_0(x',v') D_x^s[G(t,x-x'-tv',v-v')] \dif x'\dif v'.
\end{align*}
We known from Lemma~\ref{GG0} that for any $t>0$,  
\begin{align*}
\|D_x^s[G(t,x-tv',v-v')]\|_{L_x^{p_0}L_v^{q_0}} \lesssim t^{-\kappa-s(1+1/\sigma)}. 
\end{align*}
By supposing $\kappa + s(1+1/\sigma)<1/p'$, meaning that
\begin{align*}
(1+\sigma)s + \frac{d(1+\sigma)}{p}-\frac{d(1+\sigma)}{p'} + \frac{d}{q}-\frac{d}{q'} < \frac{\sigma}{p'}, 
\end{align*}
we can apply Young's inequality to see that 
\begin{align*}
\|F_0\|_{L_{t,x}^{p'}L_v^{q'}([0,T]\times\mathbb{R}^{2d})} \lesssim T^{1/p'-\kappa-s(1+1/\sigma)} \|f_0\|_{L_x^{p}L_v^{q}(\mathbb{R}^{2d})}.  
\end{align*}
Combining this with \eqref{F12-est} then concludes the proof. 
\end{proof}

\appendix
\section{Fundamental solution of fractional Kolmogorov equation}\label{append}
This appendix demonstrates the basic integrability properties of the fundamental solution of fractional Kolmogorov equation. We point out that the following results are stronger than what we need, but it is worthwhile to present them.  There are also some relevant studies through probabilistic techniques, which can be referenced in \cite{HMP,ChenZhang}. 

\begin{lemma}\label{GGG}
The function $\mathcal{G}$, defined by \eqref{GG} with $\sigma>0$, is smooth and decays at least polynomially at infinity, and has all derivatives of nonnegative order integrable over $\mathbb{R}^{2d}$. To elaborate further, the following properties characterize $\mathcal{G}$. 
\begin{enumerate}[label=(\roman*),leftmargin=0.675cm]
\item\label{GG1} There is some constant $c_\sigma>0$ depending only $\sigma$ such that for any $(\xi,\eta)\in\mathbb{R}^{2d}$, 
\begin{align*}
\mathcal{F}_{x,v}\mathcal{G}(\xi,\eta)\lesssim e^{-c_\sigma|(\xi,\eta)|^\sigma}. 
\end{align*}
In particular, $D_{x,v}^{m_0}\mathcal{G}$ with $m_0\ge0$ is bounded in $L^\infty(\mathbb{R}^{2d})$. 
\item\label{GG2} For any $a\in[0,\sigma)$ and $m\in\mathbb{N}^{2d}$, we have 
\begin{align*}
\|\langle(x,v)\rangle^a\partial_{x,v}^m\mathcal{G}\|_{L^1(\mathbb{R}^{2d})}\lesssim_{a,m} 1. 
\end{align*}
In particular, $D_{x,v}^{m_0}\mathcal{G}$ with $m_0\ge0$ is bounded in $L^1(\mathbb{R}^{2d})$. 
\item\label{GG3} For any $m_0\ge0$ and $p_0,q_0\in[0,\infty]$, we have 
\begin{align*}
\|D_{x,v}^{m_0}\mathcal{G}\|_{L_x^{p_0}L_v^{q_0}(\mathbb{R}^{2d})}\lesssim_{m_0,p_0,q_0} 1. 
\end{align*}
\end{enumerate}
\end{lemma}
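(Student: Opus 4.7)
\textbf{Proof plan for Lemma~\ref{GGG}.} For part~\ref{GG1}, the first step is to establish the pointwise lower bound $\phi(\xi,\eta):=\int_0^1|\vartheta\xi-\eta|^\sigma\dif\vartheta\ge c_\sigma|(\xi,\eta)|^\sigma$. Since $\phi$ is continuous on $\R^{2d}$ and positively homogeneous of degree $\sigma$, by compactness of the unit sphere it suffices to check that $\phi$ is strictly positive away from the origin. If $\phi(\xi_0,\eta_0)=0$ with $(\xi_0,\eta_0)\ne0$, then the polynomial $\vartheta\mapsto\vartheta\xi_0-\eta_0$ would vanish on a positive-measure subset of $[0,1]$, forcing $\xi_0=\eta_0=0$, a contradiction. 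The $L^\infty$ bound on $D_{x,v}^{m_0}\G$ then follows immediately by dominating the integrand of its inverse Fourier transform by $|(\xi,\eta)|^{m_0}e^{-c_\sigma|(\xi,\eta)|^\sigma}$, which is integrable for every $m_0\ge0$.

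For part~\ref{GG2}, the key is to establish polynomial decay of $\p_{x,v}^m\G$ at infinity of order $2d+\sigma$, from which the weighted $L^1$ bound with $a<\sigma$ follows by direct integration. The plan is a Fourier-space decomposition of $\F_{x,v}(\p_{x,v}^m\G)=(i\xi,i\eta)^m e^{-\phi}$ via a smooth cutoff separating $\{|(\xi,\eta)|\le 1\}$ from $\{|(\xi,\eta)|\ge 1\}$. The high-frequency piece has a smooth symbol whose derivatives are controlled by those of $\phi$ away from the origin, so repeated integration by parts produces Schwartz-like tails of any prescribed order. The low-frequency piece is where the $\sigma$-order decay arises: one expands $e^{-\phi}=\sum_{j<N}(-\phi)^j/j!+R_N$ with remainder of order $(N+1)\sigma$ in $|(\xi,\eta)|$; the polynomial-in-$\phi$ terms are homogeneous distributions whose inverse Fourier transforms decay exactly like $|(x,v)|^{-2d-j\sigma}$, while the smoother remainder is handled by further integration by parts once $N$ is chosen with $(N+1)\sigma>2d+a$. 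Summing the pieces yields $|\p_{x,v}^m\G(x,v)|\lesssim\la(x,v)\ra^{-2d-\sigma}$, which implies the claimed weighted $L^1$ bound for $a<\sigma$.

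For part~\ref{GG3}, I would combine the pointwise decay from part~\ref{GG2} with direct integration: for integer multi-indices this gives $\|\p_{x,v}^m\G(x,\cdot)\|_{L_v^{q_0}}\lesssim\la x\ra^{-2d-\sigma+d/q_0}$, and the right-hand side lies in $L_x^{p_0}$ for any $p_0,q_0\in[1,\infty]$ since $2d+\sigma-d/q_0>d/p_0$ holds whenever $\sigma>0$. For the fractional operator $D_{x,v}^{m_0}$ with noninteger $m_0\ge0$, the same dyadic decomposition applies to the modified symbol $|(\xi,\eta)|^{m_0}e^{-\phi}$ with only cosmetic changes.

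The principal obstacle is the limited smoothness of $e^{-\phi}$ at the origin in Fourier space for non-integer $\sigma\in(0,2)$: it is precisely this that forces the $a<\sigma$ restriction in part~\ref{GG2}, and the bookkeeping in the Taylor-plus-cutoff splitting, in particular identifying the explicit Fourier transforms of the homogeneous pieces $\phi^j$ and estimating the remainder uniformly in the dyadic scale, is where the technical burden lies.
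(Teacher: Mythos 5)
Your argument for part~\ref{GG1} is correct and, via homogeneity plus compactness of the sphere, arguably cleaner than the paper's explicit minimization in $\eta$; nothing to object to there. The problem is in part~\ref{GG2}. Your high/low frequency splitting rests on the claim that the symbol $e^{-\phi}$, with $\phi(\xi,\eta)=\int_0^1|\vartheta\xi-\eta|^\sigma\dif\vartheta$, is smooth away from the origin, so that "repeated integration by parts produces Schwartz-like tails of any prescribed order" for the high-frequency piece, and so that the low-frequency remainder can be integrated by parts as often as needed. This is false for non-even $\sigma$: unlike the rotationally invariant stable symbol $e^{-|\zeta|^\sigma}$, here $|\vartheta\xi-\eta|^\sigma$ is singular on the set $\{\eta=\vartheta\xi\}$, and the $\vartheta$-integration gains only roughly one order of regularity. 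Concretely, for $d=1$, $\sigma=1$ one computes $\phi(\xi,\eta)=\eta^2/\xi-\eta+\xi/2$ for $0<\eta<\xi$ and $\phi(\xi,\eta)=\xi/2-\eta$ for $\eta<0<\xi$, so $\partial_\eta^2\phi$ jumps across $\eta=0$ at points far from the origin. Thus $\phi$ is only finitely differentiable on the whole cone $\{\eta=\vartheta\xi,\ \vartheta\in[0,1]\}$, and you cannot take arbitrarily many $\zeta$-derivatives of the symbol anywhere near that cone. You have correctly located one obstruction (the origin, responsible for the $a<\sigma$ cap), but missed the second one, and it is the second one that breaks the stationary-phase bookkeeping you propose; it also leaves your claimed pointwise bound $|\p_{x,v}^m\G|\lesssim\la(x,v)\ra^{-2d-\sigma}$ unproven, and hence part~\ref{GG3} as well.

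The paper's proof is designed precisely to avoid ever taking more than one $\zeta$-derivative of $\F_{x,v}\G$. It writes the relevant quantity as an oscillatory integral in $(w,\zeta)=((x,v),(\xi,\eta))$, applies once the operator $\L_1=-i|w|^{-2}w\cdot\na_\zeta$ (producing $\na_\zeta\phi$, which only involves $|\vartheta\xi-\eta|^{\sigma-1}$ and obeys $|\L_1\F_w\G|\lesssim|w|^{-1}|\zeta|^{\sigma-1}\F_w\G$), splits the domain by a cutoff adapted to $\la w\ra\lessgtr|\zeta|^{-1}$, and on the outer region iterates the operator $\L_2=-i|\zeta|^{-2}\zeta\cdot\na_w$, which differentiates only the explicit $w$-dependence and never touches the singular $\zeta$-dependence of the symbol. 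General $m$ is then recovered by Gagliardo--Nirenberg interpolation against the $L^\infty$ bounds of part~\ref{GG1}, and part~\ref{GG3} follows by interpolating $L^1$ with $L^\infty$. To repair your plan you would either have to adopt such a one-derivative scheme, or carefully track how many $\zeta$-derivatives of $\phi$ remain locally integrable near the singular cone; as written, the argument does not go through.
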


\begin{proof}
For part~\ref{GG1}, we fix $\xi\in\mathbb{R}^d$ and consider the function $\int_0^1|\vartheta\xi-\eta|^\sigma\dif\vartheta$ in the variable $\eta\in\mathbb{R}^d$. It attains its minimum at $\eta=\xi/2$ so that  
\begin{align*}
2^{\sigma}(1+\sigma)\int_0^1|\vartheta\xi-\eta|^\sigma\dif\vartheta\ge|\xi|^\sigma. 
\end{align*} 
It also follows that 
\begin{align*}
2^\sigma[1+2^{\sigma}(1+\sigma)]\int_0^1|\vartheta\xi-\eta|^\sigma\dif\vartheta \ge 2^\sigma\int_0^1(|\vartheta\xi-\eta|^\sigma+|\xi|^\sigma)\dif\vartheta \ge |\eta|^\sigma. 
\end{align*} 
We hence derive the exponential decay for $\mathcal{F}_{x,v}\mathcal{G}$. 

For part~\ref{GG2}, by abbreviating $w:=(x,v)$, $\zeta:=(\xi,\eta)$, we first have to show that 
\begin{align*}
\left|\int_{\mathbb{R}^{4d}}\langle w\rangle^a e^{iw\cdot\zeta}\mathcal{F}_w\mathcal{G}(\zeta)\dif\zeta\!\dif w\right|\lesssim_{a}1. 
\end{align*}
We know from part~\ref{GG1} that it suffices to establish that for some $n_0>0$, 
\begin{align}\label{GGD}
\left|\int_{\mathbb{R}^{4d}}\frac{|w|^{a+n_0}}{\langle w\rangle^{n_0}} e^{iw\cdot\zeta}\mathcal{F}_w\mathcal{G}(\zeta)\dif\zeta\!\dif w\right|\lesssim_{a}1. 
\end{align}
Consider the two operators 
\begin{align*}
\mathcal{L}_1:=-i|w|^{-2}w\cdot\nabla_\zeta,\quad \mathcal{L}_2:=-i|\zeta|^{-2}\zeta\cdot\nabla_w.
\end{align*}
For $l=1,2$, $\mathcal{L}_l$ satisfies $\mathcal{L}_l(e^{iw\cdot\zeta})=e^{iw\cdot\zeta}$ and has the adjoint $-\mathcal{L}_l$. We thus have 
\begin{align*}
\int_{\mathbb{R}^{2d}} e^{iw\cdot\zeta}\mathcal{F}_w\mathcal{G}\dif\zeta =-\int_{\mathbb{R}^{2d}} e^{iw\cdot\zeta}\mathcal{L}_1\mathcal{F}_w\mathcal{G}\dif\zeta, 
\end{align*} 
where 
\begin{align*}
\mathcal{L}_1\mathcal{F}_w\mathcal{G}=i\sigma|w|^{-2}\mathcal{F}_w\mathcal{G}\int_0^1w\cdot(\vartheta^2\xi-\vartheta\eta,\eta-\vartheta\xi)\,|\vartheta\xi-\eta|^{\sigma-2}\dif\vartheta&
\end{align*} 
so that it satisfies  
\begin{align}\label{LFG}
\left|\mathcal{L}_1\mathcal{F}_w\mathcal{G}\right|\lesssim |w|^{-1}|\zeta|^{\sigma-1}\mathcal{F}_w\mathcal{G}.
\end{align}
Let us pick a function $\chi_1\in C^\infty(\mathbb{R}_w^{2d}\times\mathbb{R}_\zeta^{2d})$ valued in $[0,1]$ such that $\chi_1=1$ for $2\langle w\rangle\le|\zeta|^{-1}$, and $\chi_1=0$ for $\langle w\rangle\ge|\zeta|^{-1}$, and $|\partial_w^k\chi_1|\lesssim_k \langle w\rangle^{-|k|}$ for any $k\in\mathbb{N}^d$. By setting $\chi_2:=1-\chi_1$, we decompose the integral in \eqref{GGD} as follows, 
\begin{align}\label{GGD0}
\int_{\mathbb{R}^{4d}} \frac{|w|^{a+n_0}}{\langle w\rangle^{n_0}} e^{iw\cdot\zeta}\mathcal{F}_w\mathcal{G}\dif\zeta\!\dif w =- \int_{\mathbb{R}^{4d}}(\chi_1+\chi_2) \frac{|w|^{a+n_0}}{\langle w\rangle^{n_0}}e^{iw\cdot\zeta}\mathcal{L}_1\mathcal{F}_w\mathcal{G}\dif w\!\dif\zeta.
\end{align} 
By the property of $\chi_1$, we deduce that for a fixed constant $a'\in(a,\sigma)$, 
\begin{align*}
\left|\int_{\mathbb{R}^{4d}} \chi_1\langle w\rangle^ae^{iw\cdot\zeta}\mathcal{L}_1\mathcal{F}_w\mathcal{G}\dif w\!\dif\zeta\right| \le \int_{\mathbb{R}^{4d}}\langle w\rangle^{-2d-a'+1+a}|\zeta|^{-2d-a'+1}\left|\mathcal{L}_1\mathcal{F}_w\mathcal{G}\right|\dif w\!\dif\zeta. 
\end{align*} 
It then follows from the results of \eqref{LFG} and part~\ref{GG1} that
\begin{align}\label{GGD1}
\begin{aligned}
\left|\int_{\mathbb{R}^{4d}} \chi_1\langle w\rangle^ae^{iw\cdot\zeta}\mathcal{L}_1\mathcal{F}_w\mathcal{G}\dif w\!\dif\zeta\right| &\lesssim \int_{\mathbb{R}^{4d}} \langle w\rangle^{-2d-a'+a}|\zeta|^{-2d-a'+\sigma}\mathcal{F}_w\mathcal{G}\dif w\!\dif\zeta\\
&\lesssim_a \int_{\mathbb{R}^{2d}} |\zeta|^{-2d-a'+\sigma}\mathcal{F}_w\mathcal{G}\dif\zeta
\lesssim_a 1. 
\end{aligned} 
\end{align} 
In view of the property of $\mathcal{L}_2$, we know that for any $n\in\mathbb{Z}_+$, 
\begin{align*}
\int_{\mathbb{R}^{4d}} \chi_2\frac{|w|^{a+n_0}}{\langle w\rangle^{n_0}}e^{iw\cdot\zeta}\mathcal{L}_1\mathcal{F}_w\mathcal{G}\dif w\!\dif\zeta = \int_{\mathbb{R}^{4d}} e^{iw\cdot\zeta}(-\mathcal{L}_2)^n\!\left[\chi_2\frac{|w|^{a+n_0}}{\langle w\rangle^{n_0}}\mathcal{L}_1\mathcal{F}_w\mathcal{G}\right]\dif w\!\dif\zeta. 
\end{align*} 
Based on \eqref{LFG}, a direct computation shows that for any $n\in\mathbb{Z}_+$, 
\begin{align*}
\left|\mathcal{L}_2^n\left[\chi_2|w|^{a+n_0}\langle w\rangle^{-n_0}\mathcal{L}_1\mathcal{F}_w\mathcal{G}\right]\right|
\lesssim_{n} \langle w\rangle^{a-n_0}|w|^{n_0-1-n}|\zeta|^{\sigma-1-n}\mathcal{F}_w\mathcal{G}. 
\end{align*} 
We may fix the constant $n\in\mathbb{Z}_+$ such that $n\ge 2d+\sigma$ and set $n_0:=n+1$ to see from the above estimate and the property of $\chi_2$ that for any fixed $a'\in(a,\sigma)$,
\begin{align*}
\left|\int_{\mathbb{R}^{4d}} \chi_2\frac{|w|^{a+n_0}}{\langle w\rangle^{n_0}}e^{iw\cdot\zeta}\mathcal{L}_1\mathcal{F}_w\mathcal{G}\dif w\!\dif\zeta\right| \lesssim \int_{2\langle w\rangle\ge|\zeta|^{-1}}\langle w\rangle^{a-n_0}|\zeta|^{\sigma-n_0}\mathcal{F}_w\mathcal{G}\dif w\!\dif\zeta\\
\lesssim \int_{\mathbb{R}^{4d}}\langle w\rangle^{a-a'-2d}|\zeta|^{\sigma-a'-2d}\mathcal{F}_w\mathcal{G}\dif w\!\dif\zeta \lesssim_a 1&. 
\end{align*} 
Combining this with \eqref{GGD0} and \eqref{GGD1}, we obtain \eqref{GGD} as claimed, which then implies part~\ref{GG2} with $|m|=0$. The result for general $m\in\mathbb{N}^d$ can be derived through interpolation. Indeed, we know from part~\ref{GG1} that $\partial_w^m(\langle w\rangle^a\mathcal{G})$ is bounded in $L^\infty(\mathbb{R}^{2d})$ for all $m\in\mathbb{N}^d$. By the Gagliardo-Nirenberg interpolation and the fact that $\langle w\rangle^a\mathcal{G}\in L^1(\mathbb{R}^{2d})$, we derive the boundedness of $\partial_w^m(\langle w\rangle^a\mathcal{G})\in L^1(\mathbb{R}^{2d})$ for all $m\in\mathbb{N}^d$. It then follows from induction and the triangle inequality that part~\ref{GG2} holds for any $m\in\mathbb{N}^{2d}$. 

Finally, the result of part~\ref{GG3} is a direct consequence of interpolation and the boundedness of $D_w^{m_0}\mathcal{G}\in L^\infty\cap L^1(\mathbb{R}^{2d})$ for all $m_0\ge 0$. 
\end{proof}

\end{document}